\def\@settitle{\begin{center}%
  \baselineskip14\p@\relax
  \normalfont\Large\bfseries
  \@title
  \ifx\@subtitle\@empty\else
     \\[1ex] 
     \normalsize\mdseries\@subtitle
  \fi
 \ifx\@didication\@empty\else
     \\[2ex] 
     \small     
     \it\@dedication
  \fi
  \end{center}%
}
\def\subtitle#1{\gdef\@subtitle{#1}}
\def\@subtitle{}
\def\dedication#1{\gdef\@dedication{#1}}
\def\@dedication{}
\renewcommand{\section}{\@startsection
{section}{1}{0mm}{5mm}{2mm}{\raggedright\bfseries}}
\newtheorem{theorem}{Theorem}  
\newtheorem{Theorem}[theorem]{Theorem}
\newtheorem{Lemma}[theorem]{Lemma}
\newtheorem{Corollary}[theorem]{Corollary}
\newtheorem{Proposition}[theorem]{Proposition}
\theoremstyle{definition}
\newtheorem{Definition}[theorem]{Definition}
\newtheorem{Remark}[theorem]{Remark}
\newtheorem{Question}[theorem]{Question}
\newcommand*{\longhookrightarrow}{\ensuremath{\lhook\joinrel\relbar\joinrel\rightarrow}}
\newcommand{\pathto}[3]{#1\overset{#2}{\nyoroto} #3}
\newcommand{\pathtoD}[3]{#1\overset{#2}{\leadsto\!\leadsto} #3}
\newcommand{\pathtoT}[3]{#1\overset{#2}{\leadsto\!\leadsto\!\leadsto} #3}
\def\varnabla{\stackrel{\rotatebox{180}{$\varDelta$}}{\text{ }}}
\def\adX{\mathrm{ad}X}
\def\lala{\la\!\la}
\def\rara{\ra\!\ra}
\def\jmath{\mathscr{J}}
\newcommand\mathj{\mathscr{J}}
\def\nyoroto{{\rightsquigarrow}}
\def\tilchi{{\tilde{\chi}}}
\def\bkappa{{\boldsymbol \kappa}}
\def\tilbchi{\tilde{\boldsymbol \chi}}
\def\Li{{\mathsf{Li}}}
\def\scL{{\mathscr{L}}}
\def\scLi{{\mathscr{L}i}}
\def\sLL{{\mathsf{L}}}
\def\cY{{\mathcal{Y}}}
\def\cX{{\mathcal{X}}}
\def\cy{{\tilde{y}}}
\def\wt{{\mathsf{wt}}}
\def\sR{{\mathscr{\large R}}}
\def\sM{{\mathsf{\large M}}}
\def\scM{{\mathscr{\large M}}}
\def\DD{{\mathsf{\large D}}}
\def\PP{{\mathsf{\large P}}}
\def\EE{{\mathsf{\large E}}}
\def\CC{{\mathsf{\large C}}}
\def\ll{{\ell i}}
\def\eps{{\varepsilon}}
\def\bK{{\overline{K}}}
\def\ff{{\frak f}}
\def\la{{\langle}}
\def\ra{{\rangle}}
\def\ovec#1{\overrightarrow{#1}}
\def\isom{\,{\overset \sim \to  }\,}
\def\Z{{\Bbb Z}}
\def\C{{\Bbb C}}
\def\Q{{\Bbb Q}}
\def\bQ{{\overline{\Bbb Q}}}
\def\Lie{\mathrm{Lie}}
\def\Gal{\mathrm{Gal}}
\def\eps{{\varepsilon}}
\def\bchi{{\boldsymbol \chi}}
\def\tilbchi{{\tilde{\bchi}}}
\def\brho{{\boldsymbol \rho}}
\def\CHplus{\underset{\mathsf{CH}}{\oplus}}
\begin{document}

\title{On distribution formulas \\
for complex and $\ell$-adic polylogarithms}


\author{Hiroaki Nakamura and Zdzis{\l}aw Wojtkowiak}

\dedication{Dedicated to the memory of Professor Jean-Claude Douai} 

\subjclass[2010]{11G55; 11F80, 14H30}

\address{Hiroaki Nakamura: 
Department of Mathematics, 
Graduate School of Science, 
Osaka University, 
Toyonaka, Osaka 560-0043, Japan}
\email{nakamura@math.sci.osaka-u.ac.jp}

\address{Zdzis{\l}aw Wojtkowiak: 
Universit\'e de Nice-Sophia Antipolis, D\'ept. of Math., 
Laboratoire Jean Alexandre Dieudonn\'e, 
U.R.A. au C.N.R.S., No 168, Parc Valrose -B.P.N${}^\circ$ 71,
06108 Nice Cedex 2, France}
\email{wojtkow@math.unice.fr}

\maketitle

\markboth{H.Nakamura, Z.Wojtkowiak}
{On distribution formula
for complex and $\ell$-adic polylogarithms}


\begin{abstract}
We study an $\ell$-adic Galois analogue of the distribution formulas for 
polylogarithms with special emphasis on path dependency and 
arithmetic behaviors. 
As a goal, we obtain a notion of certain universal
Kummer-Heisenberg measures that enable
interpolating the $\ell$-adic polylogarithmic 
distribution relations for all degrees.
\end{abstract}


\tableofcontents

\renewcommand{\thefootnote}{}
\footnote{
This article has appeared in the proceedings volume
``Periods in Quantum Field Theory and Arithmetic" 
(J.~Burgos Gil, K.~Ebrahimi-Fard, H.~Gangl eds),
[Conference proceedings ICMAT-MZV 2014]
Springer Proceedings in Mathematics \& Statistics
{\bf 314} (2020), pp.593--619.

}

\vspace{-1cm}

\section{Introduction}

One of the most important and useful functional equations of 
classical complex polylogarithms is a series of 
distribution relations
\begin{equation}
\label{classical}
Li_k(z^n)=n^{k-1}
\left(\sum_{i=0}^{n-1}
Li_k(\zeta_n^iz)
\right)
\qquad
(\zeta_n=e^{2\pi i/n}).
\end{equation}
J.~Milnor \cite[(7),\,(32)]{M83} says 
that a function $\scL_s(z)$ has (multiplicative) Kubert identities 
of degree $s\in\C$,
if it satisfies
\begin{equation}
\label{milnor}
\scL_s(z)=n^{s-1}\sum_{w^n=z}
\scL_s(w) 
\end{equation}
for every positive integer $n$.
The aforementioned classical identity (\ref{classical}) 
for $Li_k(z)$ is, of course, 
a typical example of Kubert identity of degree $k$, assuming,
however, correct choice of branches of the multivalued function 
$Li_k$ in all terms of the identity.
To avoid the ambiguity of branch choice, we would rather 
consider $\scL_s(z)$ as a function  $\scL_s(z;\gamma)$
of paths $\gamma$  on $\bold P^1-\{0,1,\infty\}$
from the unit vector $\ovec{01}$ to $z$.
The main aim of this paper is to study generalizations of the
above distribution relation for 
multiple polylogarithms and their $\ell$-adic Galois analogs
($\ell$-adic iterated integrals) with special emphasis on
path dependency.

Let $K$ be a subfield of $\C$ with the algebraic closure $\bK\subset\C$.
The {\it $\ell$-adic polylogarithmic characters}
$$
\tilchi_k^z: G_K\to \Z_\ell \qquad (k=1,2,\dots)
$$ 
are introduced in \cite{NW1} as $\Z_\ell$-valued 1-cochains 
on the absolute 
Galois group $G_K:=\Gal(\bK/K)$
for any given path 
$\gamma$ from 
$\ovec{01}$ to 
a $K$-rational point $z$ on $\bold P^1-\{0,1,\infty\}$.
Our study in [NW2] showed that $\tilchi_k^z:G_K\to \Z_\ell$
behave nicely as 
$\ell$-adic analogues of the classical polylogarithms $Li_k(z)$.
The $\ell$-adic polylogarithms and $\ell$-adic iterated integrals
are $\Q_\ell$-valued 
variants (and generalizations) of the above 1-cochains
$\tilchi_k^z: G_K\to \Z_\ell$. (See \S2 and \S 3 for their
precise definitions.)
We will give a geometrical proof of distribution relations for 
classical multiple polylogarithms and their $\ell$-adic
analogues in considerable generality. 
In particular, we will obtain several versions of 
Kubert identities with explicit path systems
for:
\begin{itemize}
\item
classical multiple polylogarithms (Theorem \ref{complexGeneralFormula}), 
\item
$\ell$-adic iterated integrals (Proposition \ref{l-adicGeneralYwords}, 
Theorem \ref{l-adicGeneralFormula}), 
\item
$\ell$-adic polylogarithms and polylogarithmic characters 
(Theorem \ref{l-adicpolythm}, Corollary \ref{tilchi-homogeneous}).
\end{itemize}

The polylogarithm is interpreted as a certain coefficient of 
an extension of the Tate module by the logarithm sheaf
arising from the fundamental group of $V_1:=\bold P^1-\{0,1,\infty\}$.
The motivic construction dates back to the fundamental work
of Beilinson-Deligne \cite{BD}, Huber-Wildeshaus \cite{HW98}
(see also \cite{HK} \S 6 and references therein
for more recent generalizations).
In this article, we mainly work on the $\ell$-adic realization
which forms a $\Z_\ell$- or $\Q_\ell$-valued 1-cochain on the
Galois group $G_K$.
In the collaboration \cite{DW} of the last author with J.-C.Douai, 
it was shown that certain linear combinations 
of $\ell$-adic polylogarithms at various points 
give rise to 1-cocycles on $G_K$, which lead to 
an $\ell$-adic version of Zagier's conjecture.
See also Remark \ref{relationBD} and \cite{NSW} \S 3.2

We will intensively make use of a system of 
simple cyclic covers 
$V_n:=\bold P^1-\{0,\mu_n,\infty\}$ 
over $V_1= \bold P^1-\{0,1,\infty\}$,
where 
$\mu_n$ is the group of $n$-th roots of unity
$\{1,\zeta_n,\dots,\zeta_n^{n-1}\}$
($\zeta_n:=e^{2\pi i/n}$),
and
$\{0,\mu_n,\infty\}$ denotes $\{0,\infty\}\cup\mu_n$
by abuse of notation.
We consider the family of cyclic coverings $V_n\to V_1$ and open immersions
$V_n\hookrightarrow V_1$ together with induced relations between their
fundamental group(oid)s.
Our basic idea is to understand the distribution relations of polylogarithms
as the ``trace property'' of relevant coefficients (``iterated integrals'')
arising in those fundamental groups.

As observed in \cite{NW2} and will be seen in  \S 3 below, 
unlike in the classical complex case,
there generally occur lower degree terms in $\ell$-adic case 
when a distribution relation is naively derived. 
This problem prevents artless approaches to 
$\ell$-adic Kubert identities 
i.e., distribution formulas of homogeneous form (with no lower degree terms).
Our line of studies in \S 2-6 will lead us to understand why and how 
to make use of $\Q_\ell$-paths ($\ell$-adic paths with `denominators')
to eliminate such lower degree terms dramatically.
Consequently in \S 7, as a primary goal of this paper, 
we arrive at introducing 
a generalization of the Kummer-Heisenberg measure
of \cite{NW1} so as to interpolate 
those $\ell$-adic distribution relations of polylogarithms
for all degrees.

\begin{Remark}
We have already studied in \cite{W2} and \cite{W4}
the distribution relations for those 
$\ell$-adic polylogarithms under certain
restricted assumptions
(see \cite[Prop.\,11.1.4]{W2} for $\ell$-adic dilogarithms,
\cite[Cor. 11.2.2, 11.2.4]{W2} for $\ell$-adic polylogarithms on
restricted Galois groups,
and \cite[Th.\,2.1]{W4} for $\ell$-adic polylogarithmic characters
with $\ell\nmid n$).
\end{Remark}

\noindent
{\bf Basic setup, notations and convention}:
\medskip

Below, we understand that all algebraic varieties are 
geometrically connected over a fixed field
$K\subset \C$ and that all morphisms
between them are $K$-morphisms.
A path on a $K$-variety $V$ is a topological path on $V(\C)$
or an \'etale path on $V\otimes\bK$ whose distinction will be obvious
in contexts.
The notation $\gamma: x\nyoroto y$ means 
a path from $x$ to $y$,
and write $\gamma_1\gamma_2$ for the composed path tracing
$\gamma_1$ first and then $\gamma_2$ afterwards. 
We write $\chi:G_K\to\Z_\ell^\times$ for
the $\ell$-adic cyclotomic character ($\ell$: a fixed prime).
The Bernoulli polynomials $B_k(T)$ $(k=0,1,\dots)$ 
are defined by the generating function
$\frac{ze^{Tz}}{e^z-1}=\sum_{k=0}^\infty B_k(T)\frac{z^k}{k!}$,
and the Bernoulli numbers are set as $B_k:=B_k(0)$.
For a vector space $H$, we write $H^\ast$ for 
its dual vector space.

Assume $K\supset \mu_n$. 
We shall be concerned with two kinds of standard morphisms 
defined by
\begin{equation*}
\begin{cases}
\jmath_\zeta:V_n\hookrightarrow V_1
&\jmath_\zeta(z)=\zeta z\qquad
(\zeta\in\mu_n)
; \\
\pi_n:V_n\to V_1
&\pi_n(z)=z^n.
\end{cases}
\end{equation*}

\vspace{-5mm}
\begin{figure}[H]
\begin{center}
\includegraphics[width=7cm, bb=0 0 640 400]{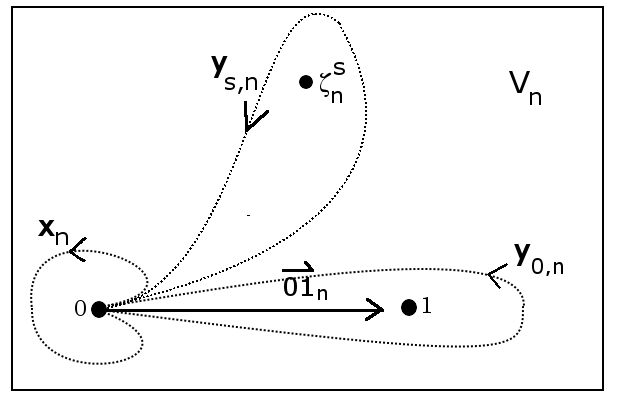}
$\begin{matrix}
\overset{\pi_n}{\rightarrow} \\
\overset{\mathj_\zeta}{\hookrightarrow}\\
{}\\{}\\
{}\\
{}\\
\end{matrix}
\quad
$
\includegraphics[width=7.3cm, bb=0 0 640 400]{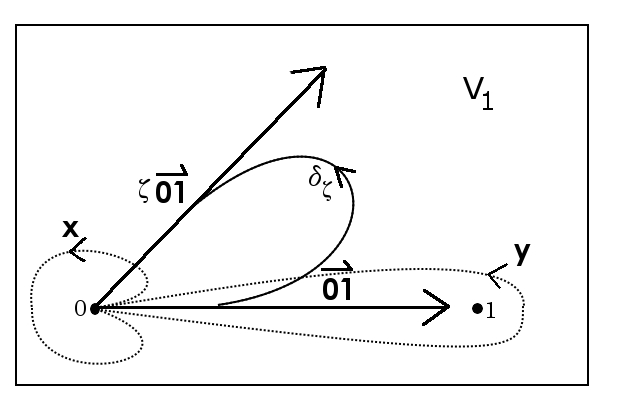}
\end{center}
\end{figure}

\vspace{-15mm}

\noindent
As easily seen, each $\jmath_\zeta$ is an open immersion,
while $\pi_n$ is an $n$-cyclic covering.
Write $\ovec{01}_n$ for the
tangential base point represented by the
unit tangent vector on $V_n$.
Since $\mathj_1:V_n\hookrightarrow V_1$ maps $\ovec{01}_n$
to $\ovec{01}_1$(often written just $\ovec{01}$), it induces
the surjection homomorphism
\begin{equation}
\pi_1(V_n,\ovec{01}_n)\twoheadrightarrow \pi_1(V_1,\ovec{01}).
\end{equation}
On the other hand, although the image $\pi_n(\ovec{01}_{n})$ 
is not exactly the same as $\ovec{01}_{1}$ as a tangent vector, 
they give the same tangential base point on $V_1$
in the sense that they give
equivalent fiber functors on the Galois category 
of finite \'etale covers of $V_1$.
Henceforth, for simplicity, we shall 
regard $\pi_n(\ovec{01}_{n})=\ovec{01}_{1}=\ovec{01}$,
and regard $\pi_1(V_n,\ovec{01}_n)$ as a subgroup of 
$\pi_1(V_1,\ovec{01})$ by the homomorphism
\begin{equation}
\label{inclusion}
\pi_1(V_n,\ovec{01}_n)\hookrightarrow \pi_1(V_1,\ovec{01})
\end{equation}
induced from $\pi_n$.

For each $\zeta\in\mu_n$,
introduce a path
$\delta_\zeta:\ovec{01}\nyoroto\zeta\ovec{01}
=\jmath_\zeta(\ovec{01}_{n})$ on $V_1$
to be the arc from $\ovec{01}$ to $\zeta\ovec{01}$
anti-clockwise oriented.
Using the path $\delta_\zeta$, we obtain the identification
$\pi_1(V_1,\ovec{01})\isom \pi_1(V_1,\zeta\ovec{01})$.

Let $x$, $y$ be standard loops based at $\ovec{01}_{1}$ 
on $V_1=\bold P^1-\{0,1,\infty\}$
turning around the punctures $0$, $1$ once anticlockwise
respectively.
We introduce loops $x_n$, $y_{0,n}\dots,y_{n-1,n}$ 
based at $\ovec{01}_{n}$ on $V_n$ characterized by:
\begin{equation*}
\begin{cases}
x_n&:=\pi_n^{-1} (x^n)=\mathj_1^{-1}(x), \\
y_{s,n}&:=\mathj_1^{-1}(\delta_\zeta)\cdot
\jmath_{\zeta^{-1}}^{-1}(y)\cdot
\mathj_1^{-1}(\delta_\zeta)^{-1}
\qquad (\zeta=e^{\frac{2\pi i s}{n}},\ s=0,\dots,n-1)
\end{cases}
\end{equation*}
so that $x_n$, $y_{0,n},\dots, y_{n-1,n}$
freely generate $\pi_1(V(\C),\ovec{01}_n)$.

Note that, in view of the above inclusion (\ref{inclusion}),
we have the identifications:
\begin{equation}
x_n=x^n, \quad
y_{s,n}=x^syx^{-s}.
\end{equation}


\section{Complex distribution relations}

For $n=1,2,...$, 
let
$$
\omega(V_n):=\frac{dz}{z}\otimes\biggl(\frac{dz}{z}\biggr){\!\rule{0pt}{2.5ex}}^\ast
+
\sum_{i=0}^{n-1}
\frac{dz}{z-\zeta_n^i}\otimes
\biggl(\frac{dz}{z-\zeta_n^i}\biggr){\!\rule{0pt}{2.5ex}}^\ast
\in
\Omega^1_{\log}(V_n)\otimes \Omega^1_{\log}(V_n)^\ast
$$
be the canonical one-form on $V_n$.
Traditionally, we set
$$
X_n:=\biggl(\frac{dz}{z}\biggr){\!\rule{0pt}{2.5ex}}^\ast
\quad\text{and}\quad
Y_{i,n}:=\biggl(\frac{dz}{z-\zeta_n^i}\biggr){\!\rule{0pt}{2.5ex}}^\ast.
$$
Let $\sR_n:=\C\lala X_n, Y_{i,n}\mid 0\le i<n\rara$
be the non-commutative algebra of formal power series
over $\C$ generated by non-commuting variables
$X_n$ and $Y_{i,n}$ $(0\le i<n)$.
Consider the trivial bundle 
$$
\sR_n\times V_n\to V_n
$$
equipped with the (flat) connection 
$\varnabla: \Phi\mapsto d\Phi-\Phi\, \omega(V_n)$
for smooth functions $\Phi:V_n\to \sR_n$.
For a piecewise smooth path $\gamma:[0,1]\to V_n$ from
$\gamma(0)=a$ to $\gamma(1)=z$, 
let  $\Phi:[0,1]\to \sR_n$ be the solution to
the differential equation $d\Phi=\Phi\,\omega(V_n)$
pulled back on $\gamma$ with $\Phi(0)=1$ and define
$\Lambda(\pathto{a}{\gamma}{z})\in\sR_n$ to be $\Phi(1)$.
(Cf.\,\cite{H} \S 2, \cite{W96} \S 1; 
we here follow Hain's path convention in loc.\,cit.)
In the case $a$ being the tangential base point
$\overrightarrow{01}$, we interpret
$\Lambda(\pathto{\ovec{01}}{\gamma}{z})$
in a suitable manner introduced 
in \cite{De}, \cite[\S 3.2]{W97}.

Let $\sM_n$ be the set of all monomials (words)
in $X_n$ and $Y_{i,n}$ $(0\le i<n)$.
Then, we can expand
\begin{equation}
\label{complexMagnusExpansion}
\Lambda(\pathto{a}{\gamma}{z})=1+\sum_{w\in\sM_n}
\Li_w(\pathto{a}{\gamma}{z})\cdot w
\end{equation}
in $\sR_n$.
If $w=X_n^{a_0}Y_{i_1,n}X_n^{a_1}\cdots Y_{i_k,n}X_n^{a_k}$,
then
\begin{equation}
\label{complexIteratedIntegral}
\Li_w(\pathto{a}{\gamma}{z})
=
\int_{a,\gamma}^z
\underbrace{\frac{dz}{z}\cdots\frac{dz}{z}}_{a_0}\cdot\frac{dz}{z-\zeta_n^{i_1}}
\cdots
\frac{dz}{z-\zeta_n^{i_k}}\cdot
\underbrace{\frac{dz}{z}\cdots\frac{dz}{z}}_{a_k},
\end{equation}
the iterated integral along $\gamma$.

\begin{Definition}
\label{def2.1}
For a word $w=X_n^{a_0}Y_{i_1,n}X_n^{a_1}\cdots Y_{i_k,n}X_n^{a_k}$,
we define its $X$-weight by
$$
\wt_X(w)=a_0+\cdots+a_k.
$$
\end{Definition}

Let the cyclic cover 
\begin{equation}
\pi_{rn,r}:V_{rn}\longrightarrow V_r
\end{equation}
be given by $\pi_{rn,r}(z)=z^n$.
Then, we have
$$
\Bigl[\mathrm{Id}\otimes (\pi_{rn,r})_\ast\Bigr](\omega(V_{rn}))
=\Bigl[(\pi_{rn,r})^\ast\otimes\mathrm{Id}\Bigr](\omega(V_r)).
$$
This implies that the induced map from
$(\pi_{rn,r})_\ast$ on complete tensor algebras
(denoted by the same symbol):
\begin{eqnarray*}
\xymatrix{
\hat T(\Omega^1_{\log}(V_{rn})^\ast)
\ar[r]\ar@{=}[d] & 
\hat T(\Omega^1_{\log}(V_r)^\ast)
\ar@{=}[d]
    \\
\C\lala X_{rn}, Y_{j,rn}\mid 0\le j<rn\rara 
\ar[r] &
\C\lala X_r, Y_{i,r}\mid 0\le i<r\rara
}
\end{eqnarray*}
preserves the associated power series:
\begin{equation}
\label{powerseriesformula}
(\pi_{rn,r})_\ast\left(
\Lambda(\pathto{\ovec{01}}{\gamma}{z})
\right)
=
\Lambda(\pathtoT{\ovec{01}}{\pi_{rn,r}(\gamma)}{z^n}).
\end{equation}

Note that 
\begin{equation}
\label{complexpreserve}
\begin{cases}
&(\pi_{rn,r})_\ast (X_{rn})=nX_r, \\
&(\pi_{rn,r})_\ast (Y_{j,rn})=Y_{i,r}  \quad
(i\equiv j \ \mathrm{mod} \ r).
\end{cases}
\end{equation}

\begin{Definition}
\label{def2.2}
For $w\in\sM_{rn}$, we mean by ($w$ mod $r$)
the word in $\sM_{r}$ obtained by replacing
each letter $X_{rn}$, $Y_{j,rn}$ $(0\le j<rn)$
appearing in $w$ by
$X_{r}$, $Y_{i,r}$ (where $i$ is an integer with
$0\le i<r$, $i\equiv j$ mod $r$)   
respectively. 
If $r$ is a common divisor of $m$ and $n$,  
$w\in\sM_m$, $w'\in\sM_{n}$
and ($w$ mod $r) = (w'$ mod $r$), 
then we shall write
$$
w\equiv w' \quad \mathrm{mod} \ r.
$$
\end{Definition}

\begin{Theorem}
\label{complexGeneralFormula}
Notations being as above, let $\gamma$ be a path
on $V_{rn}$ from $\ovec{01}$ to a point $z$.
Then, for any word $w\in\sM_r$, 
we have the distribution relation
$$
\Li_w(\pathtoT{\ovec{01}}{\pi_{rn,r}(\gamma)}{z^n})
=
n^{\wt_X(w)}
\sum_{\substack{u\in\sM_{rn}\\ u\equiv w\ \mathrm{mod}\ r}}
\Li_u(\pathto{\ovec{01}}{\gamma}{z}).
$$
\end{Theorem}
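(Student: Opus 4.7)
The plan is to derive the identity directly from the pushforward identity (\ref{powerseriesformula}) together with the explicit action (\ref{complexpreserve}) of $(\pi_{rn,r})_\ast$ on generators. The essential content is that $(\pi_{rn,r})_\ast$ on the free power series algebra preserves the associated power series $\Lambda$ attached to paths, and so comparing coefficients on both sides yields the distribution relation with the factor $n^{\wt_X(w)}$ arising from the action on $X_{rn}$.

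First I would expand the left-hand side of (\ref{powerseriesformula}) using the Magnus expansion (\ref{complexMagnusExpansion}):
$$
(\pi_{rn,r})_\ast\bigl(\Lambda(\pathto{\ovec{01}}{\gamma}{z})\bigr)
= 1 + \sum_{u\in\sM_{rn}} \Li_u(\pathto{\ovec{01}}{\gamma}{z})\,(\pi_{rn,r})_\ast(u).
$$
By (\ref{complexpreserve}), for a monomial $u \in \sM_{rn}$, applying $(\pi_{rn,r})_\ast$ letter-by-letter sends each $X_{rn}$ to $nX_r$ and each $Y_{j,rn}$ to $Y_{i,r}$ with $i\equiv j \bmod r$. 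Pulling out the scalar $n$ once for every $X_{rn}$-letter gives
$$
(\pi_{rn,r})_\ast(u) = n^{\wt_X(u)}\cdot (u\bmod r),
$$
where $(u\bmod r)$ is the word in $\sM_r$ from Definition \ref{def2.2}. The right-hand side of (\ref{powerseriesformula}) expands as $1 + \sum_{w\in\sM_r}\Li_w(\pi_{rn,r}(\gamma))\,w$.

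Next I would collect the terms on the left-hand side by their image modulo $r$: for each fixed $w\in\sM_r$, the coefficient of $w$ is
$$
\sum_{\substack{u\in\sM_{rn}\\ u\equiv w\bmod r}} n^{\wt_X(u)}\,\Li_u(\pathto{\ovec{01}}{\gamma}{z}).
$$
A small but crucial observation is that the reduction $u\mapsto (u\bmod r)$ only relabels letters (turning $X_{rn}\mapsto X_r$ and $Y_{j,rn}\mapsto Y_{i,r}$) without changing the number of $X$-letters, so every $u$ in the sum satisfies $\wt_X(u) = \wt_X(w)$. Factoring out $n^{\wt_X(w)}$ and matching coefficients with the right-hand side gives exactly the claimed formula.

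The only genuinely delicate point is the well-definedness of the pushforward identity (\ref{powerseriesformula}) at the tangential base point $\ovec{01}$, since $\pi_{rn,r}$ does not preserve the tangent vector on the nose but only up to the equivalence of fiber functors recalled in the basic setup. Once one accepts the convention $\pi_{rn,r}(\ovec{01}_{rn}) = \ovec{01}$ and the regularization of iterated integrals from tangential base points in the sense of \cite{De} and \cite{W97}, the rest of the argument is a straightforward bookkeeping of coefficients driven entirely by (\ref{complexpreserve}). I therefore expect no serious technical obstacle beyond verifying this base-point compatibility.
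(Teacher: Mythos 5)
Your proposal is correct and follows essentially the same route as the paper's own proof: apply the pushforward identity (\ref{powerseriesformula}), expand both sides via the Magnus expansion, use (\ref{complexpreserve}) to compute $(\pi_{rn,r})_\ast(u)=n^{\wt_X(u)}\,(u \bmod r)=n^{\wt_X(w)}\,w$, and compare coefficients of each word $w\in\sM_r$. Your additional remark on the tangential base-point convention is a point the paper disposes of in its setup section rather than in the proof itself, but it does not change the argument.
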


\begin{proof}
The theorem follows immediately from the formula
(\ref{powerseriesformula}):
Write 
$
\Lambda(\pathto{\ovec{01}}{\gamma}{z})=1+\sum_{u\in\sM_{rn}}
\Li_u(\pathto{\ovec{01}}{\gamma}{z})\cdot u
$
in $\sR_{rn}$.
Applying (\ref{powerseriesformula}), we obtain
$$
1+ \sum_{w\in\sM_r}
\Li_w(\pathtoT{\ovec{01}}{\pi_{rn,r}(\gamma)}{z^n})\cdot w
=
1+\sum_{u\in\sM_{rn}} 
\Li_u(\pathto{\ovec{01}}{\gamma}{z})\cdot (\pi_{rn,r})_\ast(u).
$$  
Given any specific $w\in \sM_r$ in LHS, collect from RHS
all the coefficients of $(\pi_{rn,r})_\ast(u)$ 
for those $u$ satisfying $(u$ mod $r)=w$. 
Noting that $(\pi_{rn,r})_\ast(u)=n^{\wt_X(u)}w=n^{\wt_X(w)} w$
for them, we settle the assertion of the theorem.
\end{proof}

The above theorem generalizes the distribution
relation (\ref{classical}) for the classical polylogarithm
$Li_k(z)_\gamma$ along the path $\gamma:\ovec{01}\nyoroto z$.
Indeed, in the notation above, 
since $\frac{dz}{1-z}=-\frac{dz}{z-1}$, we may identify
$$
Li_k(z)_\gamma=
-\Li_{YX^{k-1}}(\pathto{\ovec{01}}{\gamma}{z}).
$$
Applying the theorem to the
special case $\pi_{n,1}:V_n\to V_1$, $w=YX^{k-1}$
where $Y=Y_{0,1}$, $X=X_{1}$,
we obtain
$$
\int_{\ovec{01}, \pi_{n,1}(\gamma)}^{z^n}
\frac{dz}{z-1}\cdot
\underbrace{\frac{dz}{z}\cdots\frac{dz}{z}}_{k-1}
=
n^{k-1}
\sum_{\zeta\in\mu_n}
\int_{\ovec{01},\gamma}^z
\frac{dz}{z-\zeta}\cdot
\underbrace{\frac{dz}{z}\cdots\frac{dz}{z}}_{k-1}.
$$
Each term of RHS turns out to be $Li_k(\zeta z)$
along the path 
$\delta_\zeta \cdot\mathj_\zeta(\gamma):\ovec{01}\nyoroto 
\zeta\ovec{01}\nyoroto \zeta z$,
after integrated by substitution $z\to \zeta z$.
Noting that the integration here over 
$\delta_\zeta: \ovec{01}\nyoroto \zeta\ovec{01}$
vanishes (cf. \cite{W97} \S 3),
we obtain (\ref{classical}) with path system specified
as follows:
\begin{equation}
\label{refinedclassical}
Li_k(z^n)_{\pi_{n,1}(\gamma)}
=
n^{k-1}
\sum_{\zeta\in\mu_n}
Li_k(\zeta z)_{\delta_\zeta\cdot\mathj_\zeta(\gamma)}.
\end{equation}


\section{$\ell$-adic case (general)}

We shall look at the $\ell$-adic analogue of the previous section
by recalling the following construction which essentially
dates back to \cite{W04a}.
Let $K\subset \C$ and consider 
$$
\pi_1^{\ell}(V_n\otimes\overline{K},\ovec{01}),
$$
the pro-$\ell$ (completion of
the \'etale) fundamental group of $V_n\otimes \overline{K}$.
It is easy to see 
the loops 
$x_n$, $y_{1,n},\dots,y_{n-1,n}$ 
introduced in \S 1 form 
a free generator system of the pro-$\ell$ fundamental group.
Consider the (multiplicative) Magnus embedding 
into the ring of non-commutative power series
\begin{equation}
\label{MagnusEmbedding}
\iota_{\Q_\ell}:
\pi_1^{\ell}(V_n\otimes\overline{K},\ovec{01})
\longhookrightarrow
\Q_\ell\lala X_n, Y_{i,n}\mid 0\le i \le n-1 \rara
\end{equation}
defined by $\iota_{\Q_\ell}(x_n)=\exp(X_n)$, 
$\iota_{\Q_\ell}(y_{i,n})=\exp(Y_{i,n})$ (cf. \cite[15.1]{W3}).
For simplicity, we often identify elements of 
$\pi_1^{\ell}(V_n\otimes\overline{K},\ovec{01})$
with their images by $\iota_{\Q_\ell}$.
Let us write again $\sM_n$ for the set of monomials in $X_n,Y_{i,n}$
$(i=0,\dots,n-1)$ (although variables have different
senses from the previous section where they 
were duals of differential forms). 
We shall also employ the usage `$\wt_X(w)$' and 
`$w\equiv w'$ mod $r$' by following the same manners
as Definitions \ref{def2.1} and \ref{def2.2}.

Recall that we have a canonical Galois action
$G_K$ on (\'etale) paths on $V_n\otimes\overline{K}$
with both ends at $K$-rational (tangential) points.
Given a path $\gamma$ from such a point $a$
to a point
$z\in V_n(K)$, we set, for any $\sigma\in G_K$,
\begin{equation} \label{def-of-ff}
\ff_\sigma^\gamma:=\gamma\cdot\sigma(\gamma)^{-1}
\in 
\pi_1^{\ell}(V_n\otimes\overline{K},a ),
\end{equation}
where the RHS is understood to be 
the image in the pro-$\ell$ quotient.
When $a=\ovec{01}$, we expand 
$\ff_\sigma^\gamma\in \pi_1^{\ell}(V_n\otimes\overline{K},\ovec{01})$
in the form
\begin{equation} \label{LiDef}
\ff_\sigma^\gamma
=1+\sum_{w\in\sM_n}
\Li_w(\pathto{\ovec{01}}{\gamma}{z})(\sigma) \cdot w
\end{equation}
in 
$\Q_\ell\lala X_n, Y_{i,n}\mid 0\le i \le n-1 \rara$,
and associating the coefficient 
$$
\Li_w(\pathto{\ovec{01}}{\gamma}{z})(\sigma)
:=
\mathsf{Coeff}_w(\ff_\sigma^\gamma)
$$
of $w\in \sM_n$ to $\sigma\in G_K$, 
we define the $\ell$-adic Galois 1-cochain
$$
\Li_w(\pathto{\ovec{01}}{\gamma}{z})
\left(=\Li_w^{(\ell)}(\pathto{\ovec{01}}{\gamma}{z})
\right)
:G_K\to\Q_\ell
$$
for every monomial $w\in \sM_n$.
We call each $\Li_w(\pathto{\ovec{01}}{\gamma}{z})$
the $\ell$-adic iterated integral
associated to $w\in \sM_n$ and to the
path $\gamma$ on $V_n$.

\begin{Remark}
The above naming `$\ell$-adic iterated integral' is intended to
be an analog of the iterated integral appearing in the
complex case (\ref{complexMagnusExpansion}),
(\ref{complexIteratedIntegral}).
They represent general coefficients of the associator in
the Magnus expansion. Conceptually, the associator lies 
in the pro-unipotent hull of the fundamental group and the
monodromy information encoded in
the total set of them is equivalent to that encoded in
the general coefficients with respect to any 
fixed Hall basis of the corresponding Lie algebra. 
This line of formulation was, in fact, taken up, e.g., in \cite{W04a} \S 5.
However for the purpose of pursuing the distribution formulas  in the present paper, 
the simple form of trace properties (\ref{powerseriesformula}), (\ref{complexpreserve}) 
along the cyclic
coverings $\pi_{rn,r}:V_{rn}\longrightarrow V_r$ is most essential.
This is why we start with Magnus expansions $\ff_\sigma^\gamma$ 
in $\Q_\ell\lala X_n,Y_{i,n} \rara_i$ rather than with Lie expansions of $\log\ff_\sigma^\gamma$ 
with respect to a Hall basis in $\Lie\lala X_n, Y_{i,n}\rara_i$. 
But we shall discuss their relations in the polylogarithmic part of $n=1$ in \S 4.
\end{Remark}

Now, as in \S 2, let us consider the morphism
$\pi_{rn,r}:V_{rn}\to V_{r}$ given by
$\pi_{rn,r}(z)=z^n$ for $n,r>0$, and let 
$\gamma$ be a path on $V_{rn}$ from
$\ovec{01}$ to a $K$-rational point $z$.
By our construction, the $\ell$-adic analogue of the
equality (\ref{powerseriesformula}) holds, i.e., 
$\pi_{rn,r}$ preserves the $\ell$-adic associators:
\begin{equation}
\label{l-adicGeneralFormula}
(\pi_{rn,r})_\ast(\ff_\sigma^{\,\gamma})=
\ff_\sigma^{\, \pi_{rn,r}(\gamma)}
\quad (\sigma\in G_K).
\end{equation}
However, unlike the complex case
(\ref{complexpreserve}), $\pi_{rn,r}$ does
not preserve the expansion coefficients
homogeneously, i.e., it maps as
\begin{equation}
\label{eq3.2}
\begin{cases}
(\pi_{rn,r})_\ast(X_{rn})&=nX_r, \\
(\pi_{rn,r})_\ast(Y_{j,rn})
&=\exp(kX_r)Y_{i, r}\exp(-kX_r)
\qquad(j=i+kr,\ 0\le i<r).
\end{cases}
\end{equation}
\begin{proof}[Proof of $(\ref{eq3.2})$]
Note that the cyclic projections 
$\pi_{rn,r}$ identify $\{\pi_1(V_n)\}_n$ 
as a sequence of subgroups
of $\pi_1(V_1)$ as in (\ref{inclusion}),
and regard
$x_{rn}=x_r{}^n=x^{rn}$, 
$y_{j,rn}=x^j y x^{-j}=(x^r)^kx^iyx^{-i}(x^r)^{-k}=x_r^ky_{i,r}x_r^{-k}$.
Although $\pi_{rn,r}$ does not keep injectivity
on the complete envelops, it does induce a functorial homomorphism
on them. The formula follows then from $x_n=\exp(X_n)$,
$y_{s,n}=\exp(Y_{s,n}).$
\end{proof}
This causes generally (lower degree) error terms to
appear in distribution relations for $\ell$-adic
iterated integrals.  

Still, if we restrict ourselves to the words whose 
$X$-weights are zero, we have the following

\begin{Proposition}
\label{l-adicGeneralYwords}
Notations being as above, if $w\in \sM_{r}$
is a word with $\wt_X(w)=0$, i.e.,  
of the form
$w=Y_{i_k,r}\cdots Y_{i_1,r}$, then
it holds that 
$$
\Li_w(\pathtoT{\ovec{01}}{\pi_{rn,r}(\gamma)}{z^n})(\sigma)
=
\sum_{\substack{u\in\sM_{rn}\\ u\equiv w
\ \mathrm{mod}\ r}}
\Li_u(\pathto{\ovec{01}}{\gamma}{z})(\sigma)
\qquad
(\sigma\in G_K).
$$
\end{Proposition}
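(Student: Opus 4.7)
The plan is to apply the associator identity $(\pi_{rn,r})_\ast(\ff_\sigma^{\,\gamma})=\ff_\sigma^{\,\pi_{rn,r}(\gamma)}$ recorded in (\ref{l-adicGeneralFormula}) and compare coefficients of the fixed pure-$Y$ word $w=Y_{i_k,r}\cdots Y_{i_1,r}$ on both sides of this equality, viewed as an identity in $\Q_\ell\lala X_r, Y_{i,r}\mid 0\le i<r\rara$. Reading the right-hand side via the defining expansion (\ref{LiDef}) directly yields the desired $\Li_w(\pathtoT{\ovec{01}}{\pi_{rn,r}(\gamma)}{z^n})(\sigma)$, so the real task is to determine, for each $u\in\sM_{rn}$, the coefficient of $w$ in $(\pi_{rn,r})_\ast(u)$.

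First I would write out $(\pi_{rn,r})_\ast(u)$ for a general $u=X_{rn}^{a_0}Y_{j_1,rn}X_{rn}^{a_1}\cdots Y_{j_k,rn}X_{rn}^{a_k}$ using (\ref{eq3.2}): it is the alternating product of literal powers $(nX_r)^{a_\ell}$ and conjugated letters $\exp(k_\ell X_r)Y_{i_\ell,r}\exp(-k_\ell X_r)$, where $j_\ell=i_\ell+k_\ell r$ with $0\le i_\ell<r$. The structural point to exploit is that any standalone factor $(nX_r)^{a_\ell}$ with $a_\ell>0$ forces every monomial in the expansion of $(\pi_{rn,r})_\ast(u)$ to carry at least one $X_r$-letter at that position, since the surrounding exponentials contribute only nonnegative powers of $X_r$. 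Hence such a $u$ makes no contribution to the coefficient of the pure-$Y$ word $w$, and the sum collapses to those $u$ of the form $Y_{j_k,rn}\cdots Y_{j_1,rn}$.

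For these pure-$Y$ candidates, consecutive exponentials amalgamate as $\exp(-k_{\ell}X_r)\exp(k_{\ell-1}X_r)=\exp((k_{\ell-1}-k_\ell)X_r)$, and the coefficient of $w=Y_{i_k,r}\cdots Y_{i_1,r}$ in the resulting product is obtained by taking the constant term from every exponential factor. This coefficient equals $1$ precisely when $j_\ell\equiv i_\ell$ mod $r$ for every $\ell$, i.e., when $u\equiv w$ mod $r$ in the sense of Definition \ref{def2.2}, and equals $0$ otherwise. Combining this with the vanishing contribution from $u$'s containing any $X_{rn}$-letter yields the asserted formula.

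The main obstacle I anticipate is not deep but purely combinatorial: one must argue cleanly that the exponential conjugations in (\ref{eq3.2}) cannot conspire to produce an $X_r$-free monomial when some $a_\ell>0$, and that for pure-$Y$ $u$'s the only contribution to the coefficient of $w$ comes from the constant terms of the successive exponential factors. The absence of any $n^{\wt_X(w)}$ prefactor (in contrast to Theorem \ref{complexGeneralFormula}) is automatic under the hypothesis $\wt_X(w)=0$, which trivialises the powers of $n$ arising from $X_{rn}\mapsto nX_r$.
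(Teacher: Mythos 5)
Your proposal is correct and follows essentially the same route as the paper: both compare coefficients of the pure-$Y$ word $w$ in the identity $(\pi_{rn,r})_\ast(\ff_\sigma^{\gamma})=\ff_\sigma^{\pi_{rn,r}(\gamma)}$ and observe that, by (\ref{eq3.2}), only the leading ``$Y$-only'' term (the constant terms of the conjugating exponentials) of $(\pi_{rn,r})_\ast(u)$ for $u\equiv w$ mod $r$ can contribute. Your write-up merely makes explicit the two combinatorial points the paper leaves implicit --- that any $X_{rn}$-letter in $u$ forces an $X_r$ into every resulting monomial, and that the surviving coefficient is exactly $1$ --- so there is no substantive difference.
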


\begin{proof}
In the expansion of 
$(\pi_{rn,r})_\ast(\ff_\sigma^\gamma)=
\ff_\sigma^{\,\pi_{rn,r}(\gamma)}$,
the contributions to the coefficient of $w$
come only from the first `$Y$-only' term
of each $u\in\sM_{rn}$ with $u\equiv w$ 
mod $r$. 
The proposition follows from this observation.
\end{proof}

\begin{Remark}
In the $\ell$-adic Galois case, the distribution
relations of Proposition
\ref{l-adicGeneralYwords}
are used in \cite{W5} to construct measures
on $\Z_\ell^r$ which generalize the measure
on $\Z_\ell$ in \cite{NW1}.
The general distribution formula analogous to
Theorem \ref{complexGeneralFormula} for arbitrary 
words in $\sM_r$ hold
only up to lower degree terms in the
$\ell$-adic Galois case.
More generally, any covering maps between smooth
algebraic varieties will give 
some kind of distribution relations. 
\end{Remark}

\section{$\ell$-adic polylogarithms (review)}

Henceforth, we shall closely look at the case of 
$\ell$-adic polylogarithm where $r=1$ and only
those words $w\in\sM_1$ involving $Y_{0,1}$ only once
are concerned, in the setting of the previous section.
For simplicity, we write $x:=x_1$, $y:=y_{0,1}$
and $X:=\log(x)$, $Y:=\log(y)$,
and will be concerned with those coefficients
of the words $YX^{k-1}$ of $\ff_\sigma^\gamma$.

Let us recall some basic facts from \cite{NW1}, \cite{NW2}.
We introduced, for any path 
$\gamma:\ovec{01}\nyoroto z$
on $V_1=\bold P^1-\{0,1,\infty\}$,
the {\it $\ell$-adic polylogarithms}
\begin{equation}
\label{ladicpolylog}
\ll_m(z,\gamma):G_K\to \Q_\ell
\end{equation}
(with regard to the fixed free generator system 
$\{x,y\}$ of $\pi_1^\ell(V_1\otimes\bK,\ovec{01})$)
to be the Lie expansion coefficients of the associator
$\ff_\sigma^\gamma=\gamma\cdot\sigma(\gamma)^{-1}$ for
$\sigma\in G_K$ modulo the ideal $I_Y$ of Lie monomials
including $Y$ twice or more:
\begin{equation}
\label{ladicpolydef}
\log(\ff_\sigma^\gamma)^{-1}
\equiv\rho_z(\sigma) X +
\sum_{m=1}^\infty
\ll_m(z,\gamma)(\sigma)
\mathrm{ad}(X)^{m-1}(Y)
\quad\text{ mod }I_Y.
\end{equation}
Here, $\rho_z:G_K\to\Z_\ell(1)$ designates the Kummer
1-cocycle for power roots of $z$ along $\gamma$. 
Note, however, that the other coefficients 
$\ll_m(z,\gamma)(\sigma)\in\Q_\ell$ 
are generally not valued in $\Z_\ell$ due to applications of 
$\log$ respectively to $x$, $y$ and 
$\ff_\sigma^\gamma\in \pi_1^\ell(V_1\otimes\bK,\ovec{01})$.
In fact, we can bound the denominators of $\ll_m(z,\gamma)(\sigma)$
by relating them with more explicitly
defined $\Z_\ell$-valued 1-cochains called
the {\it $\ell$-adic polylogarithmic characters}
\begin{equation}
\label{ladicpolylogcharacter}
\tilchi_m^z(=\tilchi_m^{z,\gamma}):G_K\to \Z_l \qquad (m\ge 1)
\end{equation}
defined by the Kummer properties for $n\ge 1$:
\begin{equation}
\label{defpolychar}
\zeta_{\ell^n}^{\tilchi_m^z(\sigma)}=
\sigma\left(
\prod_{a=0}^{\ell^n-1}
(1-\zeta_{\ell^n}^{\chi(\sigma)^{-1}a}z^{1/\ell^n})^{\frac{a^{m-1}}{\ell^n}}\right)
\Bigm/
\prod_{a=0}^{\ell^n-1}
(1-\zeta_{\ell^n}^{a+\rho_z(\sigma)}z^{1/\ell^n})^{\frac{a^{m-1}}{\ell^n}},
\end{equation}
where $(1-\zeta_{\ell^n}^\alpha z^{1/\ell^n})^{\frac{\beta}{\ell^n}}$ 
means the $\beta$-th power of
a carefully chosen $\ell^n$-th root of $(1-\zeta_{\ell^n}^\alpha z^{1/\ell^n})$ 
along $\gamma$.
It is shown in \cite[p.293 Corollary]{NW1} that, for each $\sigma\in G_K$, the
$\ell$-adic polylogarithm
$\ll_m(z,\gamma)(\sigma)\in\Q_\ell$ 
can be expressed by the Kummer- and $\ell$-adic polylogarithmic characters 
$\rho_z(\sigma), \tilchi_m^z(\sigma) \in\Z_\ell$ as follows:
\begin{equation}
\label{explicitNW1}
\ll_m(z,\gamma)(\sigma)=
(-1)^{m+1}
\sum_{k=0}^{m-1}\frac{B_k}{k!}(-\rho_z(\sigma))^k
\frac{\tilchi_{m-k}^z(\sigma)}{(m-k-1)!}
\qquad (m\ge 1).
\end{equation}

One has then the following relations among 
$\ll_m(z,\gamma)(\sigma)\in\Q_\ell$ (\ref{ladicpolylog}),
$\tilchi_m^z(\sigma)\in\Z_\ell$ (\ref{ladicpolylogcharacter})
and 
$\Li_{YX^{m-1}}(\pathto{\ovec{01}}{\gamma}{z})(\sigma)\in\Q_\ell$
(\S 3):
\begin{Proposition}
\label{prop4.1}
(i) Notations being as above, we have
\begin{equation*}
\tilchi_m^z(\sigma)=(-1)^{m+1}(m-1)!
\sum_{k=1}^m
\frac{{\rho_z(\sigma)}^{m-k}}{(m+1-k)!}
\ll_k(z,\gamma)(\sigma)
\quad (m\ge 1).
\end{equation*}
(ii) 
Moreover,  the expansion of
$\ff_\sigma^\gamma$ in $\Q_\ell\lala X,Y\rara$
partly looks like
$$
\ff_\sigma^\gamma=
1+
\sum_{i=1}^\infty \frac{(-\rho_z(\sigma))^i}{i!}X^i
-\sum_{i=0}^\infty \frac{\tilchi_{i+1}^z(\sigma)}{i!} YX^i
+...\text{(other terms)}.
$$
In particular, we have
\begin{equation*}
\Li_{YX^{m-1}}(\pathto{\ovec{01}}{\gamma}{z})(\sigma)
=-\frac{\tilchi_m^z(\sigma)}{(m-1)!}
\quad (m\ge 1).
\end{equation*}
\end{Proposition}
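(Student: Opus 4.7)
\emph{Proof proposal.} The plan is to deduce (i) by inverting the explicit formula (\ref{explicitNW1}) viewed as a Bernoulli convolution, and then to derive (ii) by exponentiating the Lie expansion (\ref{ladicpolydef}) modulo words containing two or more $Y$'s.

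For part (i), I would first recast (\ref{explicitNW1}) under the substitution $j=m-k$ as the convolution identity
\[
(-1)^{m+1}\ll_m(z,\gamma)(\sigma)
=\sum_{j=1}^{m}\frac{B_{m-j}\,(-\rho_z(\sigma))^{m-j}}{(m-j)!}\cdot\frac{\tilchi_j^z(\sigma)}{(j-1)!}.
\]
Setting $A(t):=\sum_{m\geq 1}(-1)^{m+1}\ll_m(z,\gamma)(\sigma)\,t^{m}$ and $C(t):=\sum_{m\geq 1}\frac{\tilchi_m^z(\sigma)}{(m-1)!}\,t^{m}$, this amounts to the formal identity $A(t)=\frac{-\rho_z(\sigma)t}{e^{-\rho_z(\sigma)t}-1}\,C(t)$. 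Dividing by the Bernoulli series and using its formal reciprocal $\frac{e^{-\rho t}-1}{-\rho t}=\sum_{k\geq 0}\frac{(-\rho)^k}{(k+1)!}t^{k}$, extracting the coefficient of $t^m$ gives
\[
\frac{\tilchi_m^z(\sigma)}{(m-1)!}
=\sum_{j=1}^m\frac{(-\rho_z(\sigma))^{m-j}(-1)^{j+1}}{(m-j+1)!}\,\ll_j(z,\gamma)(\sigma),
\]
and the sign consolidation $(-1)^{m-j}(-1)^{j+1}=(-1)^{m+1}$ produces the formula asserted in (i).

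For part (ii), I would use that, by (\ref{ladicpolydef}), $\log\ff_\sigma^\gamma\equiv L_0+L_1$ holds modulo $I_Y$, where $L_0:=-\rho_z(\sigma)X$ is $Y$-free and $L_1:=-\sum_{m\geq 1}\ll_m(z,\gamma)(\sigma)\,\mathrm{ad}(X)^{m-1}(Y)$ is linear in $Y$. Modulo the associative ideal of words containing two or more $Y$'s, the exponential linearizes in first-order (Duhamel) form as
\[
\exp(L_0+L_1)\equiv\exp(L_0)+\sum_{i,j\geq 0}\frac{L_0^{\,i}\,L_1\,L_0^{\,j}}{(i+j+1)!}.
\]
The first summand equals $\sum_{i\geq 0}\frac{(-\rho_z(\sigma))^i}{i!}X^i$, recovering the claimed $X^i$-coefficients at once. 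For the $YX^{m-1}$-coefficient I would expand $\mathrm{ad}(X)^{n-1}(Y)=\sum_{k=0}^{n-1}\binom{n-1}{k}(-1)^{n-1-k}X^kYX^{n-1-k}$ and observe that inside $L_0^{\,i}L_1L_0^{\,j}$ only the terms with $i=k=0$, $j=m-n$ survive, reducing the coefficient to
\[
(-1)^m\sum_{n=1}^{m}\frac{\rho_z(\sigma)^{m-n}}{(m-n+1)!}\,\ll_n(z,\gamma)(\sigma),
\]
which by part (i) collapses to $-\tilchi_m^z(\sigma)/(m-1)!$, as asserted.

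The main obstacle is bookkeeping: tracking the various sign factors $(-1)^{?}$, distinguishing $(-\rho)^k$ from $\rho^k$, and juggling $k!$ versus $(k+1)!$ simultaneously in the Bernoulli inversion and in the distribution of $L_0^{\,i}L_1L_0^{\,j}$ over the $\mathrm{ad}(X)$-expansion. Conceptually nothing deep is hidden: (i) is powered by the formal identity $\bigl(\sum_{k\geq 0}B_k z^k/k!\bigr)\bigl(\sum_{k\geq 0}z^k/(k+1)!\bigr)=1$, and (ii) is a standard first-order perturbation of the exponential of a $Y$-free series.
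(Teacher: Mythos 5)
Your proof is correct. Part (i) matches the paper's (very terse) argument in substance: the paper merely says the formula ``follows from inductively reversing (\ref{explicitNW1})'', and your generating-function inversion --- multiplying by the reciprocal Bernoulli series $\frac{e^{-\rho t}-1}{-\rho t}=\sum_{k\ge0}\frac{(-\rho)^k}{(k+1)!}t^k$ --- is exactly that reversal made explicit and non-inductive; the signs and factorials all land correctly. For part (ii) you run the same computation in the opposite direction from the paper: the paper posits an undetermined monomial expansion of $\ff_\sigma^\gamma$ with coefficients $c_i,d_{i+1}$, computes $\log\ff_\sigma^\gamma$ modulo the ideal $J_Y=\la XY,Y^2\ra$ to express the $YX^{m-1}$-coefficient as the Bernoulli convolution $-\sum_{k}\frac{B_k}{k!}c^k d_{m-k}$, and then pins down the $d$'s by an inductive comparison with (\ref{explicitNW1}); you instead exponentiate the Lie expansion (\ref{ladicpolydef}) directly via the first-order (Duhamel) linearization $\exp(L_0+L_1)\equiv\exp(L_0)+\sum_{i,j}\frac{L_0^iL_1L_0^j}{(i+j+1)!}$ and read off the $YX^{m-1}$-coefficient, which part (i) then collapses to $-\tilchi_m^z(\sigma)/(m-1)!$. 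Your direction avoids both the ansatz and the induction; the one step worth a sentence of justification is the replacement of the Lie ideal $I_Y$ by the associative ideal of words containing $Y$ twice or more (harmless, since every Lie monomial containing $Y$ at least twice expands into associative words each containing $Y$ at least twice, and any product involving $L_1$ twice likewise dies). Your selection of surviving terms ($i=k=0$, $j=m-n$) and the resulting sign $(-1)^m$ are correct.
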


\begin{proof}
(i) follows immediately from inductively reversing
the formula (\ref{explicitNW1}).
(ii) also follows easily from discussions in
\cite[p.284-285]{NW2}:
Suppose 
$\ff_\sigma^\gamma$ has monomial expansion
as 
$$
\ff_\sigma^\gamma=
1+
\sum_{i=1}^\infty c_i \frac{X^i}{i!}
-\sum_{i=0}^\infty d_{i+1} YX^i
+...\text{(other terms)}.
$$
First, from (\ref{ladicpolydef}), we see that
$\ff_\sigma^\gamma\equiv e^{cX}$
modulo $Y=0$ with a constant $c:=-\rho_z(\sigma)$, 
hence that $c_i=c^i$.
Next, to look at the coefficients of monomials of the forms
$X^i$, $YX^i$ ($i=0,1,2,\dots$) closely, we take 
reduction modulo the ideal 
$J_Y:=\la XY, Y^2\ra$ of $\Q_\ell\lala X,Y\rara$.
Observe then the congruence:
\begin{align*}
\log(\ff_\sigma^\gamma)&\equiv
(\ff_\sigma^\gamma -1)\left\{
1-\frac{1}{2}(\ff_\sigma^\gamma-1)+\frac{1}{3}(\ff_\sigma^\gamma-1)^2
-+\cdots\right\} \\
&\equiv
\left( -\sum_{i=0}^\infty d_{i+1} YX^i \right)
\left\{
1-\frac{1}{2}(e^{cX}-1)+\frac{1}{3}(e^{cX}-1)^2
-+\cdots\right\} \\
&\equiv
\left( -\sum_{i=0}^\infty d_{i+1} YX^i \right)
\left\{
\sum_{k=0}^\infty \frac{B_k}{k!} c^kX^k
\right\}\qquad \pmod{J_Y}
\end{align*}
and find that the coefficient of $YX^{m-1}$ 
in $\log (\ff_\sigma^\gamma)$ is 
\begin{equation*}
-\sum_{k=0}^{m-1} \frac{B_k}{k!} c^k d_{m-k}
\tag{$\ast$}
\end{equation*}
for
$m\ge 1$.
\footnote{
Note that there are
misprints in \cite[p.284]{NW2} 
where exponents $\circledast=2,3$
of $(e^{(\log z)X}-1)^\circledast$
should read $\circledast=1,2$ respectively
in the 2nd and 3rd terms in line $-11$.}
On the other hand, the formula (\ref{ladicpolydef}) combined with
(\ref{explicitNW1}) calculates 
the same coefficient, which is $(-1)^{m-1}$-multiple
of that of $\mathrm{ad}(X)^{m-1}(Y)$, as to be
\begin{equation*}
(-1)^{m-1} \ll_m(z,\gamma)
=
\sum_{k=0}^{m-1}\frac{B_k}{k!}(-\rho_z(\sigma))^k
\frac{\tilchi_{m-k}^z(\sigma)}{(m-k-1)!}
\tag{$\ast\ast$}
\end{equation*}
for $m\ge 1$.
Comparing those $(\ast)$ and $(\ast\ast)$ inductively on $m\ge 1$,
we conclude our desired identities $d_{i+1}=-{\tilchi_{i+1}(\sigma)}{/i!}$
($i\ge 0$). 
\end{proof}

\begin{Remark} \label{relationBD}
The $\ell$-adic polylogarithm was constructed 
as a certain lisse $\Q_\ell$-sheaf on $V_1=\bold P^1-\{0,1,\infty\}$ 
as in \cite{BD}, \cite{HW98}, \cite{HK} and \cite{W12}.
The fiber over a point $z\in V_1(K)$ forms a polylogarithmic
quotient torsor of 
$\ell$-adic path classes from $\ovec{01}$ to $z$.
We have the $G_K$-action on the path space whose
specific coefficients are the $\ell$-adic (Galois) polylogarithms
in our sense (\ref{ladicpolylog}), viz., realized 
as $\Q_\ell$-valued 1-cochains on $G_K$.
See also, e.g., \cite{NSW} \S 3 for a concise account from the
viewpoint of non-abelian cohomology 
in a mixed Tate category. 
\end{Remark}

\section{Distribution relations for $\tilchi_m^z$}
Suppose now that $\mu_n\subset K\subset \C$ and 
that we are given a point $z\in V_n(K)$ together with 
a(n \'etale) path
$\gamma: \ovec{01}_n\nyoroto z$ on 
$V_n\otimes \overline{K}=\bold P_{\overline{K}}^1-\{0,\mu_n,\infty\}$. 
We consider the $\ell$-adic polylogarithmic 
characters $\tilchi_m^{z^n}$, $\tilchi_m^{\zeta z}:G_K\to\Z_\ell$
$(\zeta\in \mu_n)$  
along the paths $\pi_n(\gamma):\ovec{01}\nyoroto z^n$
and $\delta_\zeta \mathj_\zeta(\gamma):\ovec{01}\nyoroto \zeta z$
respectively. In this section, we shall show the following
$\ell$-adic analog of the distribution formula:

\begin{Theorem}
\label{MainFormulaInhomo} Notations being as above, we have
$$
\tilchi_k^{z^n}(\sigma)
=
\sum_{d=1}^k
\binom{k-1}{d-1}n^{d-1}
\sum_{s=0}^{n-1}
(s\chi(\sigma))^{k-d}
\tilchi_d^{\zeta_n^s z}(\sigma)
\qquad
(\sigma\in G_K,\ \zeta_n=e^{\frac{2\pi i}{n}},\,0^0=1).
$$
\end{Theorem}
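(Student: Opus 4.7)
The strategy is to compare two Magnus-level descriptions of $\ff_\sigma^\gamma\in\pi_1^\ell(V_n\otimes\bK,\ovec{01}_n)$: one from push-forward along the cyclic cover $\pi_n\colon V_n\to V_1$, and one from push-forward (with base-point adjustment by $\delta_{\zeta_n^s}$) along each open immersion $\mathj_{\zeta_n^s}\colon V_n\hookrightarrow V_1$. Matching the $YX^{k-1}$-coefficient of $\ff_\sigma^{\pi_n(\gamma)}$—which produces $\tilchi_k^{z^n}$ via Proposition~\ref{prop4.1}(ii)—with expressions coming from the $\ff_\sigma^{\delta_{\zeta_n^s}\mathj_{\zeta_n^s}(\gamma)}$—which produce the $\tilchi_d^{\zeta_n^s z}$—will give Theorem~\ref{MainFormulaInhomo} after a combinatorial simplification.

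For the cover side, (\ref{l-adicGeneralFormula}) combined with the substitutions $X_n\mapsto nX$, $Y_{j,n}\mapsto\exp(jX)Y\exp(-jX)$ from (\ref{eq3.2}) shows that only $V_n$-words $w=Y_{j,n}X_n^{a}$ ($0\le j<n$, $0\le a\le k-1$) contribute to the $YX^{k-1}$-coefficient (words without $Y_{j,n}$ produce no $Y$, those with two or more produce excess $Y$'s, and any $X_n$ prefix spoils the $X^0$-prefix before $Y$). By Proposition~\ref{prop4.1}(ii) this yields
\begin{equation*}
\tilchi_k^{z^n}(\sigma)=\sum_{j=0}^{n-1}\sum_{a=0}^{k-1}(-1)^{k-a}\binom{k-1}{a}a!\,j^{k-1-a}n^{a}\,\Li_{Y_{j,n}X_n^{a}}(\pathto{\ovec{01}}{\gamma}{z})(\sigma).
\end{equation*}

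For the inclusion side, Galois equivariance of $\mathj_{\zeta_n^s}$ over $K\supset\mu_n$ gives $\ff_\sigma^{\delta_{\zeta_n^s}\mathj_{\zeta_n^s}(\gamma)}=\Phi_s(\ff_\sigma^\gamma)\cdot\ff_\sigma^{\delta_{\zeta_n^s}}$ with $\Phi_s:=\mathrm{Ad}_{\delta_{\zeta_n^s}}\circ(\mathj_{\zeta_n^s})_*$. Since $\mathj_{\zeta_n^s}$ identifies the puncture $\zeta_n^t$ of $V_n$ with $\zeta_n^{s+t}$—a puncture of $V_1$ only when $t\equiv-s\pmod n$—the map $\Phi_s$ kills $y_{t,n}$ for $t\not\equiv -s$, sends $x_n$ to $x$, and sends $y_{-s\bmod n,n}$ to $y$ when $s=0$ and to $xyx^{-1}$ when $s\ne 0$ (the $x$-twist recording the single extra winding around $0$). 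Meanwhile, Kummer theory on the $\ell$-power roots of $\zeta_n^s$ compatible with $\delta_{\zeta_n^s}$ identifies $\ff_\sigma^{\delta_{\zeta_n^s}}=x^{-s(\chi(\sigma)-1)/n}$, with exponent in $\Z_\ell$ because $\mu_n\subset K$. Extracting $[YX^{d-1}]$ and inverting the resulting triangular system—equivalently, multiplying generating functions by $\exp((1+s(\chi(\sigma)-1)/n)X)$—expresses $\Li_{Y_{-s\bmod n,n}X_n^{a}}(\gamma)(\sigma)$ as an explicit $\Q_\ell$-linear combination of $\tilchi_d^{\zeta_n^s z}(\sigma)$ for $1\le d\le a+1$.

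Substituting this into the cover-side formula and reorganizing by $(s,d)$, the inner sum over an auxiliary index $c$ collapses by the binomial theorem to $(-s+n\alpha_s)^{k-d}=(-s\chi(\sigma))^{k-d}$ with $\alpha_s=-s(\chi(\sigma)-1)/n$; after combining with the accumulated sign factors this produces the coefficient $\binom{k-1}{d-1}n^{d-1}(s\chi(\sigma))^{k-d}$ of $\tilchi_d^{\zeta_n^s z}(\sigma)$ asserted in Theorem~\ref{MainFormulaInhomo}. The $s=0$ case collapses to its $d=k$ summand $n^{k-1}\tilchi_k^z(\sigma)$ (under the convention $0^0=1$), recovering the corresponding term. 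The main obstacle is the geometric identification of $\Phi_s$ on the generators of $\pi_1^\ell(V_n)$—particularly the single $x$-twist arising for $s\ne 0$—together with the Kummer identification of $\ff_\sigma^{\delta_{\zeta_n^s}}$; these two ingredients jointly account for the entire $\chi(\sigma)$-dependence of the theorem, after which all remaining manipulations are routine.
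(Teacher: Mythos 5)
Your proposal is correct, and the key computations check out: the cover-side formula (only words $Y_{j,n}X_n^{a}$ survive extraction of $[YX^{k-1}]$ under $(\pi_{n})_\ast$), the unitriangular inversion on the inclusion side, and the final binomial collapse do produce exactly the coefficient $\binom{k-1}{d-1}n^{d-1}(s\chi(\sigma))^{k-d}$, with the $s=0$ term degenerating as you say. The geometric skeleton is the same as the paper's --- determine the one-$Y$ part of $\ff_\sigma^\gamma$ on $V_n$ by applying the immersions $\jmath_\zeta$ (via $\ff_\sigma^{\delta_\zeta\jmath_\zeta(\gamma)}=\delta_\zeta\ff_\sigma^{\jmath_\zeta(\gamma)}\delta_\zeta^{-1}\ff_\sigma^{\delta_\zeta}$ and $\ff_\sigma^{\delta_{\zeta_n^s}}=x^{-s(\chi(\sigma)-1)/n}$), then push forward along $\pi_n$ and compare --- but the algebraic execution is genuinely different. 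The paper works with the Lie expansion of $\log(\ff_\sigma^\gamma)^{-1}$ modulo $I_{Y_\ast}$, which forces it through the Baker--Campbell--Hausdorff Lemma \ref{BCH} to handle the path composition and through the Bernoulli generating function (\ref{explicitNW1}) to convert between $\ll_k$ and $\tilchi_k$ at every stage (equations (\ref{old4.5}), (\ref{old4.10}), (\ref{li_k_zetaz})). You instead work multiplicatively with Magnus coefficients of $\ff_\sigma$ itself, where Proposition \ref{prop4.1}(ii) hands you $\tilchi_m$ directly as $-(m-1)!\,\mathsf{Coeff}_{YX^{m-1}}(\ff_\sigma^\gamma)$, path composition is plain power-series multiplication, and the $\ll$-to-$\tilchi$ conversion is replaced by inverting a unitriangular system. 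This buys a shorter, more elementary argument for this particular theorem, with all the $\chi$-dependence visibly concentrated in the two geometric inputs you single out; what the paper's Lie-theoretic route buys in exchange is the intermediate generating-function identity (\ref{old4.16}) for the $\sLL$-series, which it reuses afterwards for the explicit error-term computation and which prefigures the homogenization via $\Q_\ell$-paths in \S 6. The only place you are slightly breezy is the sign bookkeeping in the last collapse, but the accumulated factors do cancel as claimed.
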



Consider now the $\ell$-adic Lie algebras 
$L_{\Q_\ell}(\ovec{01}_n)$ and $L_{\Q_\ell}(\ovec{01})$
associated to 
$\pi_1^\ell(V_n,\ovec{01}_{n})$ and 
$\pi_1^\ell(V_1,\ovec{01}_{1})$ respectively, 
and set specific elements of them by
$X_n:=\log x_n$, $Y_{i,n}:=\log y_{i,n}$ ($i=0,\dots,n-1$),
$X:=\log x$ and $Y:=\log y$.

In the following of this section, we shall fix $\sigma\in G_K$ and frequently
omit mentioning $\sigma$ that is potentially appearing 
in each term of our functional equation. 
In particular, the quantities $\chi$, $\rho_z$ designate 
the values $\chi(\sigma)$, $\rho_z(\sigma)$ at $\sigma\in G_K$ 
respectively.
For our fixed $\sigma\in G_K$, let us determine the polylogarithmic
part of the Galois transformation 
$\ff_\sigma^\gamma:=\gamma\cdot\sigma(\gamma)^{-1}$
of the path $\gamma:\ovec{01}_n\nyoroto z$ in the form:
\begin{align}
\label{old4.6}
\log(\ff_\sigma^\gamma)^{-1}
&\equiv CX_n+\sum_{s=0}^{n-1}\sum_{m=1}^\infty 
C_{s,m}\, \mathrm{ad}(X_n)^{m-1}(Y_{s,n})\\
&\equiv CX_n+\sum_{s=0}^{n-1}\CC_s(\mathrm{ad} X_n)(Y_{s,n})
\quad\text{ mod }
I_{Y_\ast},
\nonumber
\end{align}
where, $I_{Y_\ast}$ represents 
the ideal generated by those terms including 
$\{Y_{0,n},\dots,Y_{n-1,n}\}$ twice or more, and
$\CC_s(t)=\sum_{m=1}^\infty C_{s,m} t^{m-1}
\in\Q_\ell[[t]]$ $(s=0,\dots,n-1)$.

We determine the above coefficients $C$, $C_{s,m}$
by applying the morphisms $\mathj_\zeta$ ($\zeta\in\mu_n$).
Let us set
\begin{align*}
&\sLL^{(\zeta)}(t):=\sLL_1(\zeta z)+\sLL_2(\zeta z)t+\sLL_3(\zeta z)t^2+\cdots 
\quad (\zeta \in\mu_n); \\
&\sLL^{(n)}(t):=\sLL_1(z^n)+\sLL_2(z^n)t+\sLL_3(z^n)t^2+\cdots, 
\end{align*}
with
\begin{align*}
&\begin{cases}
&\sLL_0(\zeta z):= \rho_{\zeta z} =\rho_z+\frac{s}{n}(\chi-1) \\
&\sLL_1(\zeta z):= \rho_{1-\zeta z}, \\
&\sLL_k(\zeta z):= \frac{\tilchi_k^{\zeta z}(\sigma)}{(k-1)!}
\quad(k\ge 2); \\
&(\zeta=e^{2\pi i s/n},\ s=0,1,\dots,n-1 ),  \\
\end{cases}
\quad
\begin{cases}
\sLL_0(z^n)&:=\rho_{z^n}=n\rho_z, \\
\sLL_1(z^n)&:=\rho_{1-z^n}
=\sum_{\zeta\in\mu_n} \rho_{1-\zeta z},  \\
\sLL_k(z^n)&:=\frac{\tilchi^{z^n}_{k}(\sigma)}{(k-1)!}
\quad(k\ge 2).
\end{cases}
\end{align*}
Then, 

\begin{Lemma}
\label{determinCsm}
\begin{align*}
(1)\quad &C=\sLL_0(z)=\rho_z.
\\
(2)\quad
&\CC_0(t)=\sLL^{(1)}(-t)\frac{\rho_z  t}{e^{\rho_z t}-1}.
\\
(3)\quad &\CC_s(t)=\sLL^{(\zeta)}(-t)e^{((\frac{s}{n}-1)\chi-\frac{s}{n})t}
\frac{\rho_z  t}{e^{\rho_z t}-1}
\qquad
(s=1,\dots,n-1;\ \zeta=e^{-\frac{2\pi is}{n}}).
\end{align*}
\end{Lemma}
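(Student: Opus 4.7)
The plan is to derive (1)--(3) by applying the pushforwards $(\mathj_\zeta)_*$ for $\zeta\in\mu_n$ to the expansion (\ref{old4.6}) and matching the result against the $\ell$-adic polylogarithm expansion (\ref{ladicpolydef}) at the point $\zeta z$ along the path $\tilde\gamma_\zeta:=\delta_\zeta\cdot\mathj_\zeta(\gamma)$ on $V_1$. The crucial point is that under the composite $\tau_\zeta\circ\mathj_\zeta$, where $\tau_\zeta$ denotes the base-point change isomorphism $\pi_1(V_1,\zeta\ovec{01})\isom\pi_1(V_1,\ovec{01})$ by conjugation with $\delta_\zeta$, all but one of the generators $y_{s',n}$ of $\pi_1(V_n,\ovec{01}_n)$ map to the trivial element, leaving a single non-trivial $Y$-linear contribution to track.

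For (1) and (2), take $\zeta=1$, so that $\mathj_1$ is the open immersion $V_n\hookrightarrow V_1$ and sends $x_n\mapsto x$, $y_{0,n}\mapsto y$, $y_{s,n}\mapsto 1$ for $s\ne 0$. Pushing (\ref{old4.6}) forward yields
\[
\log(\ff_\sigma^{\mathj_1(\gamma)})^{-1}\equiv CX+\CC_0(\mathrm{ad}\,X)(Y)\pmod{I_Y},
\]
and comparison with (\ref{ladicpolydef}) applied at $z$ along $\mathj_1(\gamma)$ gives $C=\rho_z=\sLL_0(z)$ on the $X$-part and $\CC_0(t)=\sum_{m\ge1}\ll_m(z,\mathj_1(\gamma))(\sigma)\,t^{m-1}$ on the $Y$-part. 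Substituting (\ref{explicitNW1}) and reindexing the double sum shows that $\CC_0(t)$ factors as the Cauchy product of $\sLL^{(1)}(-t)$ with the Bernoulli generating function $\rho_z t/(e^{\rho_z t}-1)$, which produces (2).

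For (3), fix $s\in\{1,\ldots,n-1\}$ and $\zeta=e^{-2\pi is/n}$. A direct path computation shows that the concatenation $\delta_\zeta\cdot(\zeta\delta_{\zeta_n^s})$ traces one full anticlockwise loop around $0$ in $V_1$, hence equals $x$; therefore $(\tau_\zeta\circ\mathj_\zeta)_*(y_{s,n})=xyx^{-1}$, while $y_{s',n}\mapsto 1$ for $s'\ne s$ and $x_n\mapsto x$ as before. Consequently the push-forward of (\ref{old4.6}) reads
\[
(\tau_\zeta\circ\mathj_\zeta)_*\log(\ff_\sigma^\gamma)^{-1}\equiv CX+\CC_s(\mathrm{ad}\,X)\,e^{\mathrm{ad}\,X}(Y)\pmod{I_Y}.
\]
On the other hand, the path identity $\ff_\sigma^{\tilde\gamma_\zeta}=\tau_\zeta\mathj_\zeta(\ff_\sigma^\gamma)\cdot\ff_\sigma^{\delta_\zeta}$, in which $\ff_\sigma^{\delta_\zeta}$ is a pure power of $x$ controlled by the Kummer cocycle $\rho_\zeta$, lets one express $\log(\ff_\sigma^{\tilde\gamma_\zeta})^{-1}$ in terms of the above. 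I would work inside the truncated Lie algebra $L_{\Q_\ell}(\ovec{01})/I_Y\cong\Q_\ell X\ltimes V$ (with $V=\bigoplus_{k\ge0}\Q_\ell\,\mathrm{ad}(X)^k(Y)$ an abelian ideal) and use the closed-form BCH identity $\log(e^v e^{\xi X})=\xi X+\frac{\xi T}{e^{\xi T}-1}v$ (with $T=\mathrm{ad}\,X$) to pass between exponential and semidirect-product coordinates, then compare against the polylogarithmic expansion at $\zeta z$ along $\tilde\gamma_\zeta$ produced just as in the $\zeta=1$ case. Solving the resulting generating-function equation for $\CC_s(t)$ yields (3). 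The main obstacle is keeping the sign and cocycle bookkeeping consistent: the exponential prefactor in (3) comes out precisely from combining the Kummer relation $\rho_{\zeta z}=\rho_z+\rho_\zeta$ with the conjugation factor $e^{\mathrm{ad}\,X}$ forced by $(\tau_\zeta\circ\mathj_\zeta)_*(y_{s,n})=xyx^{-1}$, via the elementary identity $-(1+\rho_\zeta)=(s/n-1)\chi-s/n$.
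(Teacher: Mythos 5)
Your proposal follows the paper's own proof essentially verbatim: apply $\jmath_1$ and compare with (\ref{ladicpolydef}) and (\ref{explicitNW1}) for (1)--(2), and for (3) use the decomposition $\ff_\sigma^{\delta_\zeta\jmath_\zeta(\gamma)}=\delta_\zeta\ff_\sigma^{\jmath_\zeta(\gamma)}\delta_\zeta^{-1}\ff_\sigma^{\delta_\zeta}$ together with the fact that $\delta_\zeta\jmath_\zeta(y_{s,n})\delta_\zeta^{-1}=xyx^{-1}$ and that $\ff_\sigma^{\delta_\zeta}$ is a power of $x$, then a Baker--Campbell--Hausdorff computation modulo $I_Y$. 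The only cosmetic difference is that you re-derive the needed BCH identity from the semidirect-product structure of $L_{\Q_\ell}(\ovec{01})/I_Y$, where the paper simply invokes Lemma \ref{BCH}(ii); your sign and exponent bookkeeping, in particular $-(1+\rho_\zeta)=(\frac{s}{n}-1)\chi-\frac{s}{n}$, agrees with the paper's $-\frac{n-s}{n}(\chi-1)-1$.
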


The proof of this lemma will be given later 
in this section.

\bigskip
\noindent
{\bf Proof of Theorem
\ref{MainFormulaInhomo} assuming 
Lemma \ref{determinCsm}:}

\medskip
We apply the morphism $\pi_n:V_n\to V_1$ to 
$\log(\ff_\sigma^{\gamma})^{-1}$.
We first observe that
$\pi_n(X_n)=nX$, 
$\pi_n(Y_{s,n})=x^sYx^{-s}=\sum_{k=0}^\infty
\frac{s^k}{k!}(\adX)^k(Y)
=e^{s\cdot\adX}(Y)$ 
for $s=0,\dots,n-1$.
Hence,
\begin{equation}
\label{old4.12}
\pi_n(\log(\ff_\sigma^{\gamma})^{-1})
=CnX+\sum_{s=0}^{n-1}
\CC_s(n\,\adX)
\left(
\sum_{k=0}^\infty
\frac{s^k}{k!}(\mathrm{ad} X)^k
\right)
(Y).
\end{equation}
The above LHS equals to
\begin{equation}
\label{old4.13}
\log(\ff_\sigma^{\pi_n(\gamma)})^{-1}
=
\rho_{z^n} X+\sum_{k=1}^\infty
\ll_k(z^n,\pi_n(\gamma))(\adX)^{k-1}(Y).
\end{equation}
{}From the formula (\ref{explicitNW1}) we see that
\begin{equation}
\label{old4.5}
\sum_{k=1}^\infty \ll_k(z^n,\pi_n(\gamma)) t^k
=
t\,\sLL^{(n)}(-t)\frac{\rho_z  nt}{e^{\rho_z  nt}-1},
\end{equation}
hence that the equality of  RHSs of 
(\ref{old4.12}) and (\ref{old4.13}) results in:
\begin{equation}
\label{old4.14}
\sum_{s=0}^{n-1}\CC_s(nt)e^{st}=
\sLL^{(n)}(-t)\frac{\rho_z  nt}{e^{\rho_z  nt}-1}.
\end{equation}

Substituting $\CC_s(t)$ $(s=0,\dots, n-1)$ 
by Lemma \ref{determinCsm} (2), (3),
the above left side equals 
\begin{equation}
\label{old4.15}
\left(
\sLL^{(1)}(-nt)+\sum_{s=1}^{n-1}
\sLL^{(\zeta)}(-nt)
e^{((\frac{s}{n}-1)\chi-\frac{s}{n})nt}e^{st}
\right)
\frac{\rho_z  nt}{e^{\rho_z nt}-1}
\end{equation}
where, in the summation $\sum_s$,  we understand
$\zeta=e^{-\frac{2\pi is}{n}}$.
As 
$((\frac{s}{n}-1)\chi-\frac{s}{n})nt+st=-(n-s)\chi t$, 
the replacement of $\zeta$ by 
$\zeta_n^{\,s}=e^{\frac{2\pi is}{n}}$
enables us to collect the sum as  
$\sum_{s=0}^{n-1}\sLL^{(\zeta)}(-nt)e^{-s\chi t}$.
Finally, substituting $t$ for $-t$, we obtain
\begin{equation}
\label{old4.16}
\sLL^{(n)}(t)=
\sum_{s=0}^{n-1}
\sLL^{(\zeta)}(nt)e^{s\chi t}
\qquad
(\zeta=e^{\frac{2\pi is}{n}}).
\end{equation}
Theorem \ref{MainFormulaInhomo} 
follows from comparing the coefficients of
the above equation.
\qed

\medskip
We prepare the following combinatorial lemma
concerning the Baker-Campbell-Hausdorff sum:
$S\CHplus T=\log(e^Se^T)$.
Let 
$$
\beta(t)=\frac{t}{e^t-1}=\sum_{n=0}^\infty B_n\frac{t^n}{n!}
$$
be the generating function for Bernoulli numbers.

\begin{Lemma}
\label{BCH}
Let $K$ be a field of characteristic 0 and let 
$\alpha$, $\ell_0,\ell_1,\dots\in K$. 
Let $\ell(X,Y)=\ell_0X+\ell_+(\adX)(Y)
=\ell_0X+\sum_{k=1}^\infty
\ell_{k}\,(\adX)^{k-1}(Y)$
be an arbitrary element of 
the formal Lie series ring $\Lie_K\lala X,Y\rara$
with $\ell_+(t)\in K[\![t]\!]$.
Then, we have the following congruence formulas modulo 
$I_Y$.
{\small
\begin{align*}
(i) \quad \ell(X,Y) \CHplus \alpha X &\equiv
(\alpha+\ell_0)X+\left(
\frac{\beta((\alpha+\ell_0)\adX)}{\beta(\alpha\, \adX)}
\ell_+(\adX)\right)(Y);
\\
(ii)\quad
\alpha X \CHplus \ell(X,Y) &\equiv
(\alpha+\ell_0)X+\left(
\frac{\beta((\alpha+\ell_0)\adX)}{\beta(\ell_0\,\adX)}
\ell_+(\adX)e^{\alpha\,\adX}\right)(Y).
\end{align*}
}
\end{Lemma}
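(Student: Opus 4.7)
My plan is to exploit that, since $M := \ell_+(\adX)(Y)$ contains exactly one $Y$, any term quadratic or higher in $M$ lies in $I_Y$, so each side of (i) and (ii) may be computed by a first-order Taylor expansion in $M$ around $M=0$. The workhorse is the Duhamel formula
\[
\frac{d}{ds}\,e^{A(s)} \;=\; e^{A(s)}\cdot \frac{1 - e^{-\mathrm{ad}A(s)}}{\mathrm{ad}A(s)}\bigl(\dot A(s)\bigr),
\]
which, applied to $A(s) = \ell_0 X + sM$ at $s=0$, yields
\[
e^{\ell_0 X + M} \;\equiv\; e^{\ell_0 X}\Bigl(1 + \tfrac{1 - e^{-\ell_0\adX}}{\ell_0\adX}(M)\Bigr)\pmod{I_Y}.
\]

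For case (i), I would multiply on the right by $e^{\alpha X}$, conjugate the $M$-correction across via $\mathrm{Ad}(e^{-\alpha X}) = e^{-\alpha\adX}$, and factor out $e^{(\ell_0+\alpha)X}$ on the left. Inverting the same Duhamel formula at the exponent $(\ell_0+\alpha)X$ then extracts $\log$ and produces, as the linear-in-$Y$ contribution to the exponent,
\[
\frac{(\ell_0+\alpha)\adX}{1 - e^{-(\ell_0+\alpha)\adX}}\circ e^{-\alpha\adX}\circ\frac{1-e^{-\ell_0\adX}}{\ell_0\adX}\,\ell_+(\adX)(Y).
\]
Rewriting $\tfrac{t}{1-e^{-t}}=\beta(-t)$ and $\tfrac{1-e^{-t}}{t}=1/\beta(-t)$ and then applying the shift identity $\beta(-t) = e^t\beta(t)$, the exponential prefactors conspire so that $e^{(\ell_0+\alpha)\adX}\cdot e^{-\alpha\adX}\cdot e^{-\ell_0\adX} = 1$ and only the $\beta$-ratio times $\ell_+(\adX)$ remains, giving (i). Case (ii) is handled in perfect symmetry: multiply $e^{\ell_0 X + M}$ on the \emph{left} by $e^{\alpha X}$, so that $e^{(\ell_0+\alpha)X}$ factors out immediately on the left without an intermediate conjugation of $M$; the residual $e^{\alpha\adX}$ appearing on the right of $\ell_+(\adX)$ in the final formula arises when moving the correction past $e^{\alpha X}$ to write the product in the standard form required by the analogous Bernoulli cancellation.

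The main technical obstacle is the clean bookkeeping of the several exponential operators $e^{\pm\alpha\adX}$, $e^{\pm\ell_0\adX}$, $e^{\pm(\ell_0+\alpha)\adX}$ together with the repeated invocation of $\beta(-t) = e^t\beta(t)$; the miraculous collapse to a single $\beta$-ratio, clean in (i) and leaving one surviving $e^{\alpha\adX}$ in (ii), is delicate, and any misplaced sign in the shift identity would immediately destroy it.
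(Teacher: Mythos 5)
Your overall strategy is sound and is in fact more self-contained than the paper's own proof, which does not argue at all but simply cites the ``polylogarithmic BCH formula'' of [NW2, Prop.~5.9 and (5.8)]. Linearizing in $M=\ell_+(\adX)(Y)$ modulo $I_Y$ and using the derivative-of-the-exponential formula to get $e^{\ell_0X+M}\equiv e^{\ell_0X}\bigl(1+\tfrac{1-e^{-\ell_0\adX}}{\ell_0\adX}(M)\bigr)$, then inverting that identity at the exponent $(\ell_0+\alpha)X$, is a correct and complete route; your bookkeeping of the conjugations is right, and for (ii) it yields exactly the stated $\frac{\beta((\alpha+\ell_0)\adX)}{\beta(\ell_0\adX)}\,\ell_+(\adX)e^{\alpha\adX}(Y)$ (the order of $\ell_+(\adX)$ and $e^{\alpha\adX}$ is immaterial since all these operators are power series in $\adX$).

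There is, however, a real problem with your treatment of (i): the operator you display, after substituting $\tfrac{t}{1-e^{-t}}=e^{t}\beta(t)$ and $\tfrac{1-e^{-t}}{t}=e^{-t}\beta(t)^{-1}$ and cancelling $e^{(\ell_0+\alpha)\adX}e^{-\alpha\adX}e^{-\ell_0\adX}=1$, simplifies to
\[
\frac{\beta((\alpha+\ell_0)\adX)}{\beta(\ell_0\,\adX)}\,\ell_+(\adX)(Y),
\]
with $\beta(\ell_0\adX)$ in the denominator, not the $\beta(\alpha\adX)$ printed in the statement; so your computation does not ``give (i)'' as you assert. A degree-two check shows your formula is the correct one and the printed (i) has $\alpha$ and $\ell_0$ interchanged in the denominator: for $\ell=\ell_0X+\ell_1Y$ the raw Baker--Campbell--Hausdorff term is $\tfrac12[\ell_0X+\ell_1Y,\alpha X]=-\tfrac{\alpha\ell_1}{2}\,\adX(Y)$, matching the $t$-coefficient $-\tfrac{\ell_1}{2}\bigl((\alpha+\ell_0)-\ell_0\bigr)=-\tfrac{\alpha\ell_1}{2}$ of $\beta((\alpha+\ell_0)t)\beta(\ell_0t)^{-1}\ell_1$, whereas the printed version would give $-\tfrac{\ell_0\ell_1}{2}$. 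You should state this discrepancy explicitly rather than claim agreement; note that nothing downstream is affected, since only part (ii) is invoked in the proof of Lemma \ref{determinCsm}.
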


\begin{proof}
Both formulas follow from the polylogarithmic BCH formula and 
with a representation of the core generating function.
See \cite[Prop.\,5.9 and (5.8)]{NW2}.
\end{proof}

\bigskip
\noindent
{\bf Proof of Lemma \ref{determinCsm}:}
\medskip

Apply
the morphisms $\jmath_\zeta$ ($\zeta\in\mu_n$)
to determine the coefficients $C_{m,s}$ 
of the polylogarithmic terms of 
$\log\ff_\sigma^\gamma$ in (\ref{old4.6}).

\medskip
\noindent
{\bf Case $\zeta=1$}:
Observe that $\jmath_1(X_n)=X$, $\jmath_1(Y_{0,n})=Y$ and 
$\jmath_1(Y_{i,n})=0$ $(i\ne 0)$.
Then, it follows from (\ref{old4.6}) that
$$
\jmath_1(\log(\ff_\sigma^\gamma)^{-1})
=\log(\ff_\sigma^{\jmath_1(\gamma)})^{-1}
\equiv
CX+
\bigl(\CC_0(\adX)\bigr)(Y)
\quad\text{ mod }I_Y.
$$
We immediately see that the first coefficient $C$ is
given by
\begin{equation}
\label{old4.8}
C=\rho_z=\sLL_0(z),
\end{equation}
and that the other polylogarithmic coefficients
are given by (\ref{explicitNW1}) as follows:
\begin{equation}
\label{old4.10}
\CC_0(t)=
\sum_{k=1}^\infty \ell i_k(z,\mathj_1(\gamma))t^{k-1}
=\sLL^{(1)}(-t)\frac{\rho_z  t}{e^{\rho_z t}-1}.
\end{equation}

\medskip
\noindent
{\bf Case $\zeta\ne 1$}:
Assume $\zeta=e^{-\frac{2\pi is}{n}}$ $(s=1,\dots,n-1)$.
We observe in this case that 
$\delta_\zeta\,\jmath_\zeta(X_n)\,\delta_\zeta^{-1}=X$, 
$\delta_\zeta\,\jmath_\zeta(Y_{s,n})\,\delta_\zeta^{-1}
=xYx^{-1}=\sum_{k=0}^\infty\frac{(\adX)^k(Y)}{k!}=e^{\adX}(Y)$ 
and $\jmath_\zeta(Y_{i,n})=0$ $(i\ne 0)$.
Therefore, it follows from (\ref{old4.6}) that
\begin{equation}
\label{eq5.9new}
\delta_\zeta\cdot
\jmath_\zeta(\log(\ff_\sigma^\gamma)^{-1})
\cdot \delta_\zeta^{-1} \equiv
CX+\bigl(\CC_s(\adX)e^{\adX}\bigr)(Y)
\quad\text{ mod }I_Y.
\end{equation}
On the other side, since 
$\ff_\sigma^{\delta_\zeta \mathj_\zeta(\gamma)}
=\delta_\zeta\ff_\sigma^{\mathj_\zeta(\gamma)}\delta_\zeta^{-1}
\ff_\sigma^{\delta_\zeta}$
by (\ref{def-of-ff}),
we have
%
\begin{align}
\label{old4.7}
\delta_\zeta &\cdot
\mathj_\zeta(\log(\ff_\sigma^\gamma)^{-1})
\cdot \delta_\zeta^{-1} 
=
\delta_\zeta\cdot
\log(\ff_\sigma^{\mathj_\zeta(\gamma)})^{-1}
\cdot \delta_\zeta^{-1}  \\
&=
\left(
-\log(\ff_\sigma^{\delta_\zeta})^{-1}
\right)
\CHplus
\left(
\log(\ff_\sigma^{\delta_\zeta \mathj_\zeta(\gamma)})^{-1}
\right)
\nonumber
\\
&\equiv
\left(-\frac{n-s}{n}(\chi-1)X\right)
\CHplus
\left(
\ell i_0(\zeta z)X+
\sum_{k=1}^\infty
\ell i_k(\zeta z)(\adX)^{k-1}(Y)
\right)
\nonumber
\end{align}
mod $I_Y$, where $\ell i_k(\zeta z)$ $(k\ge 0)$
are taken along the path
$\delta_\zeta \mathj_\zeta(\gamma)$.
Note here that 
$\ell i_0(\zeta z)=\sLL_0(\zeta z)=\rho_z+\frac{n-s}{n}(\chi-1)$
and that (\ref{explicitNW1}) implies
\begin{equation}
\label{li_k_zetaz}
\sum_{k=1}^\infty
\ell i_k(\zeta z, \delta_\zeta \mathj_\zeta(\gamma)) t^{k-1}
=
\sLL^{(\zeta)}(-t) \, \beta(\sLL_0(\zeta z)\,t)
=\sLL^{(\zeta)}(-t)\frac{\sLL_0(\zeta z)  t}{e^{\sLL_0(\zeta z) t}-1}.
\end{equation}
Putting this into (\ref{old4.7}) and using Lemma \ref{BCH}(ii), we find
\begin{equation}
\delta_\zeta\cdot
\jmath_\zeta(\log(\ff_\sigma^\gamma)^{-1})
\cdot \delta_\zeta^{-1} \equiv
\rho_zX+
\left(\sLL^{(\zeta)}(-\adX)
e^{-\frac{n-s}{n}(\chi-1) \adX}
\frac{\rho_z \adX}{e^{\rho_z \adX}-1}\right)(Y)
\end{equation}
mod $I_Y$.
Comparing this with (\ref{eq5.9new}),
we obtain
\begin{equation}
\label{old4.11}
\CC_s(t)=\sLL^{(\zeta)}(-t)e^{(-\frac{n-s}{n}(\chi-1)-1)t}
\frac{\rho_z  t}{e^{\rho_z t}-1}
\qquad
(s=1,\dots,n-1;\ \zeta=e^{-\frac{2\pi is}{n}}).
\end{equation}
Thus, the proof of Lemma \ref{determinCsm} is completed.
\qed

\begin{Remark}
In \cite[Theorem 5.7]{NW2}, we gave a general tensor 
criterion to have a functional equation of 
(complex and $\ell$-adic) polylogarithms
from a collection of morphisms 
$\{f_i:X\to \bold P^1-\{0,1,\infty\}\}_{i\in I}$ and 
their formal sum $\sum_{i\in I} c_i [f_i]$. 
In our above case, it holds that the collection 
$\{\pi_n, \mathj_0,\dots,\mathj_{n-1}:
V_n\to V_1\}$ satisfies the criterion 
with coefficients $1,-n^{k-1},\dots,-n^{k-1}$
(as observed already in \cite[(1.9) (iii)]{Ga}).
Explicit evaluation of the error terms
$\EE_k:=\EE_k(\sigma,\gamma)$ discussed 
in \cite{NW2} 
(that explains part of lower degree inhomogeneous terms
of our functional equation)  
can be obtained a posteriori 
from (\ref{old4.5}), (\ref{old4.10}), (\ref{li_k_zetaz}) 
and (\ref{old4.16}) as:
\begin{equation*}
\sum_{k=1}^\infty \EE_k t^k
=
\frac{\rho_z nt^2}{e^{\rho_z nt}-1}
\sum_{s=1}^{n-1}
\sLL^{(\zeta_n^s)}(-nt)
(e^{-s\chi t}-e^{-s(\chi-1) t}).
\end{equation*}
Note that the lower degree terms other than $\EE_k$
are explained by the Roger type normalization 
(difference from $\ll_k$ and $\tilchi_k$)
and the effects from compositions of paths 
$\ovec{01}\nyoroto\zeta\ovec{01}\nyoroto \zeta z$
of Baker-Campbell-Hausdorff type.
\end{Remark}

\begin{Remark}
Replacing $\sLL^{(n)}(t)$, $\sLL^{(\zeta)}(nt)$ 
in (\ref{old4.16})
by those generating functions 
for 
\linebreak
$\ell i_k(z^n, \pi_n(\gamma))$, 
$\ell i_k(\zeta z, \delta_\zeta\mathj_\zeta(\gamma))$
by (\ref{old4.5}), (\ref{old4.10}) and (\ref{li_k_zetaz}),
we obtain an equation
\begin{align*}
&\sum_{k=1}^\infty
\ell i_k(z^n,\pi_n(\gamma)) t^{k-1} \\
&=
\frac{\rho_z nt}{e^{\rho_z nt}-1}
\sum_{s=0}^{n-1}
e^{s\chi t}
\left(
\frac{e^{-\sLL_0(\zeta_n^s z)nt}-1}{-\sLL_0(\zeta_n^s z)nt}
\right)
\sum_{k=1}^\infty
\ell i_k(\zeta_n^s z, \mathj_{\zeta_n^s}(\gamma)) (-nt)^{k-1}
\end{align*}
in $\Q_\ell[\![t]\!]$.
{}From this, for every fixed $k\ge 1$, one may express 
$\ell i_k(z^n,\pi_n(\gamma)) $
as a linear combination of the
$\ell i_d(\zeta_n^s z, \delta_{\zeta_n^s}\mathj_{\zeta_n^s}(\gamma))$
($s=0,\dots,n-1$, $d=1,\dots,k$).
However, those coefficients are apparently more complicated
than those in Theorem \ref{MainFormulaInhomo} 
where the polylogarithmic characters 
$\tilchi_k^{z^n}$, $\tilchi_d^{\zeta z}$ are treated.
\end{Remark}

\section{Homogeneous form}

We keep the notations in \S 5 with assuming
$\mu_n\subset K$.
Let $\pi_{\Q_\ell}(\ovec{01}_n)$ denote
the {\it $\ell$-adic pro-unipotent fundamental group}
of $V_n\otimes\bK$ based at $\ovec{01}_n$
which is by definition the pro-unipotent
hull of the image of the 
Magnus embedding (\ref{MagnusEmbedding})
consisting of all the group-like elements of the
complete Hopf algebra
$\Q_\ell\lala X_n, Y_{i,n}\mid 0\le i \le n-1 \rara$.
We also define the 
{\it $\ell$-adic pro-unipotent path space}
(or $\Q_\ell$-{\it path space} for short)
$\pi_{\Q_\ell}(\ovec{01}_n,v)$ for a 
$K$-(tangential) point $v$ on $V_n$
to be the $\Q_\ell$-rational extension of
the path torsor 
$\pi_1^\ell(V_n\otimes\bK,\ovec{01}_n,v)$ via 
$\pi_1^\ell(V_n\otimes\bK,\ovec{01}_n)
\subset\pi_{\Q_\ell}(\ovec{01}_n)$.
Note that both 
$\pi_{\Q_\ell}(\ovec{01}_n)$ and 
$\pi_{\Q_\ell}(\ovec{01}_n,v)$
have natural actions by $G_K$
compatible with identification
$$
\pi_1^\ell(V_n\otimes\bK,\ovec{01}_n)
\subset\pi_{\Q_\ell}(\ovec{01}_n),\quad
\pi_1^\ell(V_n\otimes\bK,\ovec{01}_n,v)
\subset\pi_{\Q_\ell}(\ovec{01}_n,v).
$$
Let us introduce rational modifications of the loops $y_{s,n}$
($s=0,\dots, n-1$) and the paths 
$\delta_\zeta$ ($\zeta\in\mu_n$) respectively as follows.
For $s=0,\dots,n-1$ and $\zeta=e^{2\pi is/n}$, set
\begin{align*}
\cy_{s,n}&:=x_n^{-\frac{s}{n}} y_{s,n} x_n^{\frac{s}{n}}\in 
\pi_{\Q_\ell}(\ovec{01}_n),\\
\eps_\zeta&:= x^{-\frac{s}{n}}\cdot \delta_\zeta\in 
\pi_{\Q_\ell}(\ovec{01},\zeta\ovec{01}).
\end{align*}
Note that, in the case $n=1$, we have
$x=x_1$, $y=\tilde{y}_{0,1}$ by definition.

\medskip
The following lemma is the key to homogenize the $\ell$-adic
distribution formula.

\begin{Lemma}
\label{homogeneousLemma}
(i) For every $\sigma\in G_K$ and $\zeta\in\mu_n$, 
we have $\sigma(\eps_\zeta)=\eps_\zeta$. 
Moreover, for any path 
$\gamma$ from $\zeta\ovec{01}$ to a $K$-point
$w$ on $V_1$, 
we have $\eps_\zeta\,\ff_\sigma^{\,\gamma}
\, \eps_\zeta^{-1}
=\ff_\sigma^{\eps_\zeta\, \gamma}$.

(ii) The natural extensions of the homomorphisms on
$\pi_{\Q_\ell}(\ovec{01}_\ast)$
induced by $\jmath_\zeta: V_n\to V_1$, 
$\pi_{rn,r}:V_{rn}\to V_r$ (denoted by the same symbols)
map the loops
$x_n$, $\cy_{s,n}$ $(s=0,\dots,n-1)$ as follows.
\begin{align*}
(a)\quad 
&\eps_\zeta\,\jmath_\zeta(x_n)\,\eps_\zeta^{-1}=x. 
\\
(b)\quad
&\pi_{rn,r}(x_{rn})=x_r{}^n.  
\\
(c)\quad
&\eps_\zeta\,\jmath_\zeta(\cy_{s,n})\,\eps_\zeta^{-1}=
\begin{cases}
y &(\zeta=e^{-2\pi is/n}), \\
1 &(\zeta\ne e^{-2\pi is/n}).
\end{cases} 
\\
(d)\quad
&\pi_{rn,r}(\cy_{j,rn})= \cy_{i,r}  
\qquad(0\le i<r,\ 0\le j<rn,\ i\equiv j \ \mathrm{ mod }\ r).
\end{align*}
\end{Lemma}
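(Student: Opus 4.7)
The plan is to treat the two parts of the lemma separately: part (i) reduces to a Kummer-type computation for the Galois action on $\delta_\zeta$, while part (ii) is a direct calculation inside $\pi_{\Q_\ell}(\ovec{01}_n)$ using the explicit presentations of the generators.

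For part (i), the key step is to pin down $\sigma(\delta_\zeta)$. The formula for $\log(\ff_\sigma^{\delta_\zeta})^{-1}$ already computed in the proof of Lemma \ref{determinCsm} gives
\[
\ff_\sigma^{\delta_\zeta}=x^{s'(1-\chi(\sigma))/n}\qquad\text{for }\zeta=e^{2\pi is'/n},
\]
which is intuitive: $\delta_\zeta$ is the ``fractional loop'' $x^{s'/n}$ around $0$ in the pro-unipotent path torsor $\pi_{\Q_\ell}(\ovec{01},\zeta\ovec{01})$, Galois acts by $\sigma(x)=x^{\chi(\sigma)}$, and the endpoints $\ovec{01},\zeta\ovec{01}$ are $\sigma$-fixed because $\zeta\in\mu_n\subset K$. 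Using the cocycle identity $\ff_\sigma^{\gamma_1\gamma_2}=\gamma_1\ff_\sigma^{\gamma_2}\gamma_1^{-1}\ff_\sigma^{\gamma_1}$ and the fact that $x$ commutes with its fractional powers, a short computation then gives $\ff_\sigma^{\eps_\zeta}=\ff_\sigma^{\delta_\zeta}\cdot x^{s'(\chi(\sigma)-1)/n}=1$, i.e.\ $\sigma(\eps_\zeta)=\eps_\zeta$. The second assertion of (i) unwinds mechanically: $\ff_\sigma^{\eps_\zeta\gamma}=\eps_\zeta\gamma\sigma(\gamma)^{-1}\sigma(\eps_\zeta)^{-1}=\eps_\zeta\ff_\sigma^{\gamma}\eps_\zeta^{-1}$.

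For part (ii), items (a), (b), (d) are routine. Item (a) holds because $\jmath_\zeta(x_n)$ is the loop around $0$ based at $\zeta\ovec{01}$, namely $\delta_\zeta^{-1}x\delta_\zeta$, and $x^{-s'/n}$ commutes with $x$. Item (b) follows from the covering degree $n$ of $\pi_{rn,r}(z)=z^n$. For (d), one combines (b) with the group-level version $\pi_{rn,r}(y_{j,rn})=x_r^{k}y_{i,r}x_r^{-k}$ of (\ref{eq3.2}) (for $j=i+kr$, $0\le i<r$); a direct calculation of $\pi_{rn,r}(x_{rn}^{-j/(rn)}\,y_{j,rn}\,x_{rn}^{j/(rn)})$ yields $\cy_{i,r}$ because $j/r-k=i/r$.

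The substantive work is in (c). For $\zeta\ne e^{-2\pi is/n}$, the image $\jmath_\zeta(y_{s,n})$ is a loop around $\zeta\cdot\zeta_n^{s}\ne 1$, which is not a puncture of $V_1$ and hence contractible, so $\jmath_\zeta(\cy_{s,n})=1$. For $\zeta=e^{-2\pi is/n}$ I would unwind $y_{s,n}=\mathj_1^{-1}(\delta_{\zeta_n^s})\cdot\jmath_{\zeta_n^{-s}}^{-1}(y)\cdot\mathj_1^{-1}(\delta_{\zeta_n^s})^{-1}$ and apply $\jmath_\zeta$ factor by factor. The middle factor collapses to $y$ because $\jmath_\zeta$ coincides with $\jmath_{\zeta_n^{-s}}$ here. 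The outer arc $\jmath_\zeta(\mathj_1^{-1}(\delta_{\zeta_n^s}))$, being the $\zeta$-rotation of the short anticlockwise arc, differs from $\delta_\zeta^{-1}$ by exactly one anticlockwise loop around $0$, giving $\jmath_\zeta(\mathj_1^{-1}(\delta_{\zeta_n^s}))=\delta_\zeta^{-1}\cdot x$. Assembling yields $\delta_\zeta\,\jmath_\zeta(y_{s,n})\,\delta_\zeta^{-1}=xyx^{-1}$, and conjugating further by $x^{-(n-s)/n}$ to promote $\delta_\zeta$ to $\eps_\zeta$ produces $\eps_\zeta\,\jmath_\zeta(y_{s,n})\,\eps_\zeta^{-1}=x^{s/n}yx^{-s/n}$; the outer factors $\jmath_\zeta(x_n^{\mp s/n})$ coming from the definition of $\cy_{s,n}$ contribute, by (a), the factors $x^{\mp s/n}$ that cancel this twist, leaving $y$. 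The principal obstacle is this bookkeeping of fractional exponents: one must verify that the three independent $x^{\pm s/n}$ contributions — from the normalization inside $\cy_{s,n}$, from $\eps_\zeta$, and from the extra loop hidden in $\jmath_\zeta(\mathj_1^{-1}(\delta_{\zeta_n^s}))$ — cancel exactly.
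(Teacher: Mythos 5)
Your proof is correct and follows essentially the same route as the paper: establish $\sigma(\delta_\zeta)=x^{\frac{s}{n}(\chi(\sigma)-1)}\delta_\zeta$ (hence $\ff_\sigma^{\eps_\zeta}=1$) for part (i), and reduce (ii)(c) to the identity $\delta_\zeta\,\jmath_\zeta(y_{s,n})\,\delta_\zeta^{-1}=xyx^{-1}$ before cancelling the fractional powers of $x$ coming from $\eps_\zeta$ and from the normalization inside $\cy_{s,n}$. The only difference is that you re-derive these two input identities from the definitions, whereas the paper imports them from \S 5 and from the argument of \cite[Prop.~1]{NW1}; your bookkeeping of the exponents is accurate.
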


\noindent
\begin{proof}
(i): Let $\zeta=e^{2\pi is/n}$ ($s=0,\dots,n-1$). 
By the assumption
$\mu_n\subset K$, we have 
$\chi(\sigma)\equiv 1$ mod $n$ for $\sigma\in G_K$.
The first assertion follows immediately from the formula 
$$
\sigma(\delta_\zeta)=x^{\frac{s}{n}(\chi(\sigma)-1)}\delta_\zeta,
$$ 
which can be easily seen from an argument similar to the proof
of \cite[Prop.\,1]{NW1} with
replacement of $\bar F(\!(t-z)\!)$ by $\bar F\{\!\{\zeta t\}\!\}$.
The second claim follows easily from 
the definition (\ref{def-of-ff}):
$\ff_\sigma^p=p\cdot\sigma(p)^{-1}$ for any path $p:a\nyoroto b$.

(ii): (a), (b) and the case $\zeta\ne e^{-2\pi is/n}$ of (c) are
trivial. (d) follows from (b) and the fact 
$\pi_{rn,r}(y_{j,rn})=x_r^ky_{i,r}x_r^{-k}$ with $j=i+kr$, $0\le i<r$
(\ref{eq3.2}).
It remains to prove (c) in the case $\zeta=e^{-2\pi is/n}$.
Suppose first that $\zeta$ is different from $1$, i.e., 
$\zeta=e^{-2\pi is/n}$ for any fixed $s=1\dots n-1$.
Then $\eps_\zeta=x^{-\frac{n-s}{n}}\cdot\delta_\zeta$.
Since
$\delta_\zeta\,\jmath_\zeta(y_{s,n})\,\delta_\zeta^{-1}=xyx^{-1}$,
(a) implies 
$\delta_\zeta \jmath_\zeta(\cy_{s,n})\delta_\zeta^{-1}=
x^{-\frac{s}{n}}xyx^{-1}x^{\frac{s}{n}}
=x^{\frac{n-s}{n}}yx^{-\frac{n-s}{n}}$.
It follows then that 
$\eps_\zeta\jmath_\zeta(\cy_{s,n})\eps_\zeta^{-1}=y$.
Next, suppose $\zeta=1$ (i.e., $s=0$). Then, it is easy to settle
this case by $\jmath_1(y_{0,n})=y$.
We thus complete the proof of (c).
\end{proof}

Now, we embed $\pi_{\Q_\ell}(\ovec{01}_n)$ and its Lie algebra 
$L_{\Q_\ell}(\ovec{01}_n)$ into the non-commutative power
series ring $\Q_\ell\lala \cX_n, \cY_{s,n}\mid 0\le s<n\rara$
by setting $\cX_n:=X_n=\log x_n$, 
$\cY_{s,n}:=\log\cy_{s,n}$,
and denote by $\scM_n$ the set of monomials in 
$\cX_n, \cY_{s,n}$ $(s=0,\dots,n-1)$.
For $w\in \scM_n$, let $\wt_X(w)$ denote the number
of $\cX_n$ appearing in $w$.
We shall also employ the monomial congruence 
`$w\equiv w'$ mod $r$' by following the same manner
as Definition \ref{def2.2} after replacing $X_n$, $Y_{i,n}$
by $\cX_n$, $\cY_{i,n}$ ($n\in\Z_{>0}$, $0\le i<n$)
respectively.
For the case $n=1$, we will also simply write 
$\cX=\cX_1$, $\cY=\cY_{0,1}$.

\begin{Definition}
Let $z$ be a point in $V_n(K)$.
Given a $\Q_\ell$-path $p\in \pi_{\Q_\ell}(\ovec{01},z)$ 
and any $\sigma\in G_K$, we set 
$\ff_\sigma^p:=p\cdot\sigma(p)^{-1}$
and expand it in the form
\begin{equation*}
\label{scLiDef}
\ff_\sigma^p
=1+\sum_{w\in\scM_n}
\scLi_w(\pathto{\ovec{01}}{p}{z})(\sigma) \cdot w
\end{equation*}
in 
$\Q_\ell\lala \cX_n, \cY_{i,n}\mid 0\le i \le n-1 \rara$.
(Recall that, in (\ref{LiDef}), another (non-commutative) expansion 
of $\ff_\sigma^\gamma$ for 
$\gamma\in\pi_1^\ell(V_n\otimes\bK,\ovec{01},z)$
was considered by using a different set of variables.) 
We call the above coefficient character
$$
\scLi_w(\pathto{\ovec{01}}{p}{z})
\left(=\scLi_w^{(\ell)}(\pathto{\ovec{01}}{p}{z})
\right)
:G_K\to\Q_\ell
$$
the {\it $\ell$-adic iterated integral}
associated to the word $w\in \scM_n$ and to the
$\Q_\ell$-path $p$ on $V_n$.
\end{Definition}

\begin{Theorem}
\label{l-adicGeneralFormula}
Let $p$ be a $\Q_\ell$-path
on $V_{rn}$ from $\ovec{01}$ to a point $z\in V_{rn}(K)$.
Then, for any word $w\in\scM_r$, 
we have the distribution relation
$$
\scLi_w(\pathtoT{\ovec{01}}{\pi_{rn,r}(p)}{z^n})(\sigma)
=
n^{\wt_X(w)}
\sum_{\substack{u\in\scM_{rn}\\ u\equiv w\ \mathrm{mod}\ r}}
\scLi_u(\pathto{\ovec{01}}{p}{z})(\sigma)
$$
for $\sigma\in G_K$.
\end{Theorem}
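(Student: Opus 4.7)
The plan is to mimic the proof of Theorem \ref{complexGeneralFormula} once Lemma \ref{homogeneousLemma} has replaced the inhomogeneous trace formula (\ref{eq3.2}) by a genuinely homogeneous one on the tilde-variables. The whole point is that for the modified generators $\cy_{s,n} = x_n^{-s/n} y_{s,n} x_n^{s/n}$, parts (b) and (d) of Lemma \ref{homogeneousLemma} give
$$
\pi_{rn,r}(x_{rn})=x_r^{\,n},\qquad \pi_{rn,r}(\cy_{j,rn})=\cy_{i,r}\quad(j\equiv i \bmod r,\;0\le i<r),
$$
which, after taking $\log$ and passing to the induced algebra homomorphism on the complete Magnus algebras, translates into
$$
(\pi_{rn,r})_\ast(\cX_{rn})=n\cX_r,\qquad (\pi_{rn,r})_\ast(\cY_{j,rn})=\cY_{i,r}.
$$
This is exactly the $\ell$-adic homogeneous analog of (\ref{complexpreserve}).

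First I would observe that since $\pi_{rn,r}:V_{rn}\to V_r$ induces a $G_K$-equivariant homomorphism $\pi_{\Q_\ell}(\ovec{01}_{rn})\to\pi_{\Q_\ell}(\ovec{01}_r)$ on pro-unipotent fundamental groups and a compatible map of $\Q_\ell$-path spaces, we have the naturality
$$
(\pi_{rn,r})_\ast(\ff_\sigma^{\,p})=\ff_\sigma^{\,\pi_{rn,r}(p)}\qquad(\sigma\in G_K),
$$
analogous to (\ref{l-adicGeneralFormula}) but now formulated for $\Q_\ell$-paths. Then, for a monomial $u=\cX_{rn}^{a_0}\cY_{i_1,rn}\cX_{rn}^{a_1}\cdots\cY_{i_k,rn}\cX_{rn}^{a_k}\in\scM_{rn}$, the multiplicativity of $(\pi_{rn,r})_\ast$ together with the two displays above yields
$$
(\pi_{rn,r})_\ast(u)=n^{a_0+\cdots+a_k}\,\cX_r^{a_0}\cY_{i'_1,r}\cX_r^{a_1}\cdots\cY_{i'_k,r}\cX_r^{a_k}=n^{\wt_X(u)}\,(u\bmod r),
$$
where $0\le i'_\nu<r$ and $i'_\nu\equiv i_\nu\bmod r$.

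Next I would expand both sides of $(\pi_{rn,r})_\ast(\ff_\sigma^{\,p})=\ff_\sigma^{\,\pi_{rn,r}(p)}$ by the definition of $\scLi_u$:
$$
1+\sum_{u\in\scM_{rn}}\scLi_u(\pathto{\ovec{01}}{p}{z})(\sigma)\cdot n^{\wt_X(u)}(u\bmod r)
=1+\sum_{w\in\scM_r}\scLi_w(\pathtoT{\ovec{01}}{\pi_{rn,r}(p)}{z^n})(\sigma)\cdot w.
$$
Fixing $w\in\scM_r$ on the right and collecting on the left those monomials $u\in\scM_{rn}$ with $u\equiv w\bmod r$, one has $\wt_X(u)=\wt_X(w)$ for every such $u$ (the number of $\cX$-letters is preserved under the reduction modulo $r$), so the factor $n^{\wt_X(u)}=n^{\wt_X(w)}$ can be pulled out of the sum. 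Comparing coefficients of $w$ gives exactly the claimed identity.

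The main obstacle, if any, is purely formal: verifying that $\pi_{rn,r}$ extends to a $G_K$-equivariant homomorphism on the pro-unipotent hulls and on the $\Q_\ell$-path spaces, so that the naturality identity $(\pi_{rn,r})_\ast(\ff_\sigma^{\,p})=\ff_\sigma^{\,\pi_{rn,r}(p)}$ makes sense and holds for $p\in\pi_{\Q_\ell}(\ovec{01},z)$. Once this functoriality is in place, the genuine content of the theorem is Lemma \ref{homogeneousLemma}(ii)(b),(d); everything else is the exact analog of the coefficient-matching argument used in the proof of Theorem \ref{complexGeneralFormula}.
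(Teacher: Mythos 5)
Your proposal is correct and follows essentially the same route as the paper: the authors' own proof is precisely "the assertion follows in the same way as Theorem \ref{complexGeneralFormula} after Lemma \ref{homogeneousLemma} (b), (d)," and you have simply written out that coefficient-matching argument in full, including the correct observation that $(\pi_{rn,r})_\ast(\cX_{rn})=n\cX_r$, $(\pi_{rn,r})_\ast(\cY_{j,rn})=\cY_{i,r}$ and that $\wt_X$ is preserved under reduction mod $r$.
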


\begin{proof}
The assertion follows in the same way as Theorem \ref{complexGeneralFormula}
after the above Lemma \ref{homogeneousLemma} (b), (d).
\end{proof}

Next, let us concentrate on the polylogarithmic part on $V_1$.
Recall that both $\pi_{\Q_\ell}(\ovec{01})$ and its Lie algebra $L_{\Q_\ell}(\ovec{01})$
are embedded in $\Q_\ell\lala X,Y\rara$, 
where $X=\cX_1$ and $Y=\cY_{0,1}$.

\begin{Definition}
\label{defofQellPolylog}
Let $z$ be a point in $V_1(K)=\bold P^1(K)-\{0,1,\infty\}$ and
$p:\ovec{01}\nyoroto z$ a $\Q_\ell$-path.
Consider the associator 
$\ff_\sigma^p:=p\cdot \sigma(p)^{-1}\in\pi_{\Q_\ell}(\ovec{01})$
for $\sigma\in G_K$, and 
define 
$$
\brho_{z,p}:G_K\to\Q_\ell, \quad \ll_m(z,p):G_K\to\Q_\ell
$$ 
by 
the non-commutative expansion corresponding to (\ref{ladicpolydef}):
$$
\log(\ff_\sigma^p)^{-1}
\equiv\brho_{z,p}(\sigma) X +
\sum_{m=1}^\infty
\ll_m(z,p)(\sigma)
(\mathrm{ad}X)^{m-1}(Y)
\quad\text{ mod }I_{Y},
$$
where $I_{Y}$ represents 
the ideal generated by those terms including 
$Y$ twice or more.
Using these, we also define
$$
\tilbchi_m^{z,p}:G_K\to\Q_\ell
$$
for $m\ge 1$ by the equation extending 
Proposition \ref{prop4.1} (i):
\begin{equation}
\tilbchi_m^{z,p}(\sigma)
=(-1)^{m+1}(m-1)!
\sum_{k=1}^m
\frac{{\brho_{z,p}(\sigma)}^{m-k}}{(m+1-k)!}
\ll_k(z, p)(\sigma).
\end{equation}
\end{Definition}

Since $\Q_\ell$-paths generally do not give
bijection systems between fibers of endpoints
on finite \'etale covers, no simple
interpretation is available for $\brho_{z,p}$ or  
$\tilbchi_m^{z,p}$ by Kummer properties:
For example, the above 
$\tilbchi_m^{z,p}(\sigma)$ $(\sigma\in G_K)$
generally has a denominator in $\Q_\ell$, i.e., 
may not be valued in $\Z_\ell$. 
This makes it difficult to understand $\tilbchi_m^{z,p}(\sigma)$ 
in terms of Kummer properties at finite levels 
of an arithmetic sequence like (\ref{defpolychar}).

Once $\brho_{z,p}$, $\ll_m(z,p)$ and 
$\tilbchi_m^{z,p}:G_K\to\Q_\ell$ are defined 
as in the above Definition, the identities as
in Proposition \ref{prop4.1} (ii) and (\ref{explicitNW1}) 
can be extended in obvious ways for them 
by formal transformations of generating functions.
In the same way, it holds that
\begin{equation}
-\frac{\tilbchi_m^{z,p}(\sigma)}{(m-1)!}=
\scLi_{YX^{m-1}}(\pathtoD{\ovec{01}}{p}{z})(\sigma)
\end{equation}
for $p\in\pi_{\Q_\ell}(\ovec{01},z)$ and $\sigma\in G_K$.

\begin{Theorem}
\label{l-adicpolythm}
Suppose $\mu_n\subset K\subset \C$ and let
$p$ be a $\Q_\ell$-path
on $V_{n}$ from $\ovec{01}$ to a point $z\in V_{n}(K)$.
Then,
$$
\ll_k(z^n, \pi_n(p))(\sigma)
=n^{k-1}\sum_{\zeta\in\mu_n}
\ll_k(\zeta z, \eps_\zeta \jmath_\zeta(p))(\sigma)
$$
holds for $\sigma\in G_K$.
\end{Theorem}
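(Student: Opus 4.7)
The plan is to mimic the Lie-algebra approach of Lemma \ref{determinCsm} but with the $\Q_\ell$-rational modifications $\cy_{s,n}$ and $\eps_\zeta$ in place of $y_{s,n}$ and $\delta_\zeta$; the point is that Lemma \ref{homogeneousLemma} makes the induced maps on $\pi_{\Q_\ell}$ homogeneous, so that the inhomogeneous correction factors appearing in Theorem \ref{MainFormulaInhomo} disappear. I would fix $\sigma \in G_K$ and expand the logarithm of the associator $\log(\ff_\sigma^p)^{-1} \in L_{\Q_\ell}(\ovec{01}_n)$ modulo the ideal $I_{\cY_\ast}$ (generated by Lie monomials involving the $\cY_{s,n}$ twice or more) in the form
$$
\log(\ff_\sigma^p)^{-1} \equiv C\,\cX_n + \sum_{s=0}^{n-1}\sum_{m=1}^\infty C_{s,m}\,(\mathrm{ad}\,\cX_n)^{m-1}(\cY_{s,n}) \quad \mathrm{mod}\ I_{\cY_\ast}.
$$

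Next I would apply the morphism $\pi_n$ to this expansion. By Lemma \ref{homogeneousLemma}(ii)(b), $\pi_n(\cX_n) = nX$, and the crucial observation is that $\pi_n(\cy_{s,n}) = y$ uniformly in $s$: indeed $\cy_{s,n} = x_n^{-s/n} y_{s,n}\, x_n^{s/n}$, $\pi_n(x_n) = x^n$ and $\pi_n(y_{s,n}) = x^s y x^{-s}$, so the $x_n^{\pm s/n}$-conjugation cancels the $x^{\pm s}$-shift exactly. Hence $\pi_n(\cY_{s,n}) = Y$ for every $s$, and applying $\pi_n$ yields, modulo $I_Y$,
$$
\log(\ff_\sigma^{\pi_n(p)})^{-1} \equiv CnX + \sum_{m=1}^\infty n^{m-1}\biggl(\sum_{s=0}^{n-1} C_{s,m}\biggr)(\mathrm{ad}\,X)^{m-1}(Y).
$$
Comparison with the defining Lie expansion of $\ll_k(z^n,\pi_n(p))$ extended to $\Q_\ell$-paths (Definition \ref{defofQellPolylog}) gives $\ll_k(z^n,\pi_n(p))(\sigma) = n^{k-1}\sum_{s=0}^{n-1} C_{s,k}$.

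To identify each $C_{s,k}$ with $\ll_k(\zeta z,\eps_\zeta \jmath_\zeta(p))(\sigma)$ for $\zeta = e^{-2\pi is/n}$, I would apply $\mathrm{Ad}(\eps_\zeta)\circ \jmath_\zeta$ to the same expansion. Lemma \ref{homogeneousLemma}(i) gives $\sigma(\eps_\zeta) = \eps_\zeta$, hence $\eps_\zeta\, \ff_\sigma^{\jmath_\zeta(p)}\, \eps_\zeta^{-1} = \ff_\sigma^{\eps_\zeta \jmath_\zeta(p)}$, and Lemma \ref{homogeneousLemma}(ii)(a),(c) give $\eps_\zeta \jmath_\zeta(\cX_n)\eps_\zeta^{-1} = X$ together with $\eps_\zeta\jmath_\zeta(\cY_{t,n})\eps_\zeta^{-1} = Y$ when $t=s$ and $=0$ otherwise. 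The expansion therefore reduces modulo $I_Y$ to
$$
\log(\ff_\sigma^{\eps_\zeta \jmath_\zeta(p)})^{-1} \equiv CX + \sum_{m=1}^\infty C_{s,m}\,(\mathrm{ad}\,X)^{m-1}(Y),
$$
so that $C_{s,m} = \ll_m(\zeta z,\eps_\zeta \jmath_\zeta(p))(\sigma)$ by comparison with Definition \ref{defofQellPolylog}. Substituting back and noting $\{e^{-2\pi is/n}:s=0,\ldots,n-1\} = \mu_n$ yields the theorem.

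The main obstacle is conceptual rather than computational: one must recognize that the uniformity $\pi_n(\cY_{s,n})=Y$ (independent of $s$) and the sharp selection $\eps_\zeta\jmath_\zeta(\cY_{t,n})\eps_\zeta^{-1}\in\{Y,0\}$ (with no $\chi(\sigma)$-twisted exponential) are precisely what eliminates the lower-degree error terms of Theorem \ref{MainFormulaInhomo}. Once Lemma \ref{homogeneousLemma} supplies these formulas, the Lie-algebra bookkeeping modulo $I_{\cY_\ast}$ proceeds cleanly, with no need to invoke a Baker--Campbell--Hausdorff correction as in Lemma \ref{BCH}.
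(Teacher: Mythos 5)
Your proposal is correct and follows essentially the same route as the paper's proof: a Lie expansion of $\log(\ff_\sigma^p)^{-1}$ modulo $I_{\cY_\ast}$ in the variables $\cX_n$, $\cY_{s,n}$, identification of the coefficients via $\mathrm{Ad}(\eps_\zeta)\circ\jmath_\zeta$ using Lemma \ref{homogeneousLemma}(i),(ii), and comparison after applying $\pi_n$ (the paper merely performs the $\jmath_\zeta$-identification before rather than after applying $\pi_n$, which is immaterial). Your closing observation that the homogeneity of Lemma \ref{homogeneousLemma} removes any need for a BCH correction is exactly the point of the paper's construction.
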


\noindent
{\it Proof.} We first put the Lie expansion of 
$\log (\ff_\sigma^p)^{-1}$ in 
$\cX_n=X_n=\log x_n$, $\cY_{s,n}=\log\cy_{s,n}$ ($s=0,\dots,n-1$) in
the Lie algebra $L_{\Q_\ell}(\ovec{01}_n)$
as:
\begin{align}
\log(\ff_\sigma^\gamma)^{-1}
&\equiv DX_n+\sum_{s=0}^{n-1}\sum_{m=1}^\infty 
D_{s,m}\, (\mathrm{ad}\cX_n)^{m-1}(\cY_{s,n})\\
&\equiv DX_n+\sum_{s=0}^{n-1}\left(\DD_s(\mathrm{ad} \cX_n)\right)(\cY_{s,n})
\quad\text{ mod }
I_{\cY_\ast},
\nonumber
\end{align}
where, $I_{\cY_\ast}$ represents 
the ideal generated by those terms including 
$\{\cY_{0,n},\dots,\cY_{n-1,n}\}$ twice or more, and
$\DD_s(t)=\sum_{m=1}^\infty D_{s,m} \,t^{m-1}
\in\Q_\ell[[t]]$ $(s=0,\dots,n-1)$.
We shall determine those coefficients $D$ and $\DD_{s,m}$ by
applying the morphisms $\jmath_\zeta$.
For any fixed $\zeta=\zeta_n^{-s}$ ($s=0,\dots,n-1$),
by Lemma \ref{homogeneousLemma} (i), we obtain
$\ff_\sigma^{\eps_\zeta \jmath_\zeta(p)}
=\eps_\zeta\cdot 
\jmath_\zeta(p\cdot\sigma(p)^{-1}) 
\cdot\sigma(\eps_\zeta)^{-1}
=\eps_\zeta\cdot 
\jmath_\zeta(\ff_\sigma^p) 
\cdot\eps_\zeta^{-1}
$, 
hence
$$
\eps_\zeta
\cdot \jmath_\zeta(\log(\ff_\sigma^p)^{-1})\cdot
\eps_\zeta^{-1}
=
\log(\ff_\sigma^{\eps_\zeta \jmath_\zeta(p)})^{-1}.
$$
As the right hand side comes from the associator for the
path $\eps_\zeta \jmath_\zeta(p):\ovec{01}\nyoroto \zeta z$,
it should coincide, by definition, with
$$
\brho_{\zeta z,  \eps_\zeta \jmath_\zeta(p)}(\sigma) X+
\sum_{k=1}^\infty
\ll_k(\zeta z, \eps_\zeta \jmath_\zeta(p))(\sigma)
\, (\mathrm{ad}X)^{k-1}(Y),
$$
while, the left hand side can be calculated after 
Lemma \ref{homogeneousLemma} (ii) (a), (c) to equal to
$$
D X+ 
\sum_{k=1}^\infty D_{s,k}\,(\adX)^{k-1}(Y)
$$
with $s$ given by $\zeta=e^{-2\pi is/n}$.
Therefore, we conclude
\begin{align}
D&=\brho_{\zeta z,  \eps_\zeta \jmath_\zeta(p)}(\sigma), 
\label{6.2eq}\\
D_{s,k}&=\ll_k(\zeta z, \eps_\zeta \jmath_\zeta(p))(\sigma)
\label{6.3eq}
\end{align}
for $\zeta=e^{-2\pi is/n}$ ($s=0,\dots,n-1)$.
Now, apply the projection morphism $\pi_n:=\pi_{n,1}:V_n\to V_1$
and interpret the both sides of equality 
$\pi_n(\log(\ff_\sigma^p)^{-1})=
\log(\ff_\sigma^{\pi_n(p)})^{-1}$.
Then, we obtain
$$
DnX+\sum_{s=0}^{n-1} 
\sum_{k=1}^\infty D_{s,k}\,(n\,\adX)^{k-1}(Y)
=
\brho_{z^n,\pi_n(p)}X+\sum_{k=1}^\infty
\ll_k(z^n, \pi_n(p))
\, \mathrm{ad}(X)^{k-1}(Y).
$$
Comparing the coefficient of 
$(\adX)^{k-1}(Y)$ in the above and (\ref{6.3eq}), we
conclude the proof of the theorem. \qed

\bigskip
In the above proof, for a given $\Q_\ell$-path 
$p:\ovec{01}\nyoroto z$ on $V_n$, we considered 
the collection of $\Q_\ell$-paths
$$
\mathscr{P}_n:=\{\eps_\zeta \jmath_\zeta(p):
\ovec{01}\nyoroto \zeta z
\mid \zeta=\zeta_n^s\in\mu_n
\ (s=0,1,\dots,n-1)
\}
$$ on $V_1=\mathbf{P}^1-\{0,1,\infty\}$ .
Note that each $\eps_\zeta \jmath_\zeta(p)$
can also be written as the composite of paths on $V_1$:
%
\begin{align}
&\eps_\zeta\cdot [\zeta p]=
x^{-\frac{s}{n}}\cdot
\delta_\zeta\cdot [\zeta p]
: \\
&\ovec{01}\overset{x^{-\frac{s}{n}}}{\leadsto\!\leadsto\ }
\ovec{01}\overset{\delta_\zeta}{\leadsto}
\zeta\ovec{01}\overset{[\zeta p]}{\leadsto\!\leadsto} 
\zeta z
\notag
\end{align}
where $[\zeta p]:\zeta\ovec{01}\nyoroto \zeta z$ means
a path obtained by ``rotating'' 
$p:\ovec{01}\nyoroto z$ by the automorphism of 
$\mathbf{P}^1-\{0,\infty\}$ with multiplication by $\zeta$.

\begin{Corollary}
\label{brhoinvariance}
Notations being as above, 
the maps $\brho_{\zeta z,p}(\sigma):G_K\to\Q_\ell$ are all the
same for the $\Q_\ell$-paths
$[p:\ovec{01}\nyoroto \zeta z]
\in \bigcup_{n=1}^\infty\mathscr{P}_n$. 
\end{Corollary}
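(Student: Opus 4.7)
The plan is to extract the statement directly from the computations in the proof of Theorem~\ref{l-adicpolythm}, in particular from the identity (\ref{6.2eq}). Fix $n\ge 1$ and a $\Q_\ell$-path $p:\ovec{01}_n\nyoroto z$ on $V_n$, and consider the Lie expansion
\begin{equation*}
\log(\ff_\sigma^p)^{-1} \equiv D\,\cX_n + \sum_{s=0}^{n-1}\DD_s(\mathrm{ad}\,\cX_n)(\cY_{s,n}) \pmod{I_{\cY_\ast}}
\end{equation*}
in $L_{\Q_\ell}(\ovec{01}_n)$. The key observation is that the coefficient $D=D(p,\sigma)\in\Q_\ell$ attached to $\cX_n$ is a single quantity defined from $p$ and $\sigma$ alone, \emph{before} any $\zeta\in\mu_n$ enters the picture.

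Next I would apply the operator $\eps_\zeta\cdot\jmath_\zeta(\,\cdot\,)\cdot\eps_\zeta^{-1}$ to both sides, for each $\zeta\in\mu_n$. By Lemma~\ref{homogeneousLemma}(i), we have $\sigma(\eps_\zeta)=\eps_\zeta$ and $\ff_\sigma^{\eps_\zeta\jmath_\zeta(p)}=\eps_\zeta\cdot\jmath_\zeta(\ff_\sigma^p)\cdot\eps_\zeta^{-1}$, so the resulting left-hand side is $\log(\ff_\sigma^{\eps_\zeta\jmath_\zeta(p)})^{-1}$. On the right-hand side, Lemma~\ref{homogeneousLemma}(ii)(a) sends $\cX_n$ to $X$, while (ii)(c) sends exactly the generator $\cY_{s_\zeta,n}$ (with $s_\zeta\in\{0,\ldots,n-1\}$ the unique index satisfying $\zeta=e^{-2\pi is_\zeta/n}$) to $Y$ and kills the remaining $\cY_{s,n}$. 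Since these identifications are induced by a group homomorphism and hence respect Lie brackets, the right-hand side collapses, modulo $I_Y$, to
\begin{equation*}
D\cdot X + \DD_{s_\zeta}(\adX)(Y).
\end{equation*}

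Comparing this with the defining expansion of $\brho_{\zeta z,\,\eps_\zeta\jmath_\zeta(p)}(\sigma)$ from Definition~\ref{defofQellPolylog}, which is precisely the coefficient of $X$ in $\log(\ff_\sigma^{\eps_\zeta\jmath_\zeta(p)})^{-1}$ modulo $I_Y$, one reads off
\begin{equation*}
\brho_{\zeta z,\,\eps_\zeta\jmath_\zeta(p)}(\sigma) = D \qquad (\zeta\in\mu_n).
\end{equation*}
Since the right-hand side is independent of $\zeta$, this proves that $\brho$ is constant on $\mathscr{P}_n$, and letting $n$ and $p$ vary yields the statement for the full union $\bigcup_{n\ge 1}\mathscr{P}_n$. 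I do not expect a significant obstacle: the entire content of the argument is the invariance of the $\cX_n$-coefficient $D$ under the application of $\eps_\zeta\jmath_\zeta(\cdot)\eps_\zeta^{-1}$, and this invariance is manifest once Lemma~\ref{homogeneousLemma}(i) is used to turn the conjugation into the associator identity $\ff_\sigma^{\eps_\zeta\jmath_\zeta(p)}=\eps_\zeta\jmath_\zeta(\ff_\sigma^p)\eps_\zeta^{-1}$.
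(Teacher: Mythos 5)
Your argument is essentially the paper's own proof: the paper simply cites the identity (\ref{6.2eq}), $D=\brho_{\zeta z,\,\eps_\zeta\jmath_\zeta(p)}(\sigma)$, already established inside the proof of Theorem \ref{l-adicpolythm} via Lemma \ref{homogeneousLemma}, and you re-derive that identity correctly. The one step you gloss over is the passage from ``$\brho$ is constant on each $\mathscr{P}_n$'' to ``$\brho$ is constant on $\bigcup_{n}\mathscr{P}_n$'': constancy on each set separately does not give a common value across different $n$ unless the sets overlap. The paper supplies exactly this link by observing that every $\mathscr{P}_n$ contains $\mathscr{P}_1\ne\emptyset$ (the $\zeta=1$ path $\eps_1\jmath_1(p)=\jmath_1(p)$ is a common element), so all the per-$n$ constants equal $\brho_{z,\jmath_1(p)}(\sigma)$; you should add this sentence, after which your proof is complete.
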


\begin{proof}
As seen in (\ref{6.2eq}),
we have the common $D$ 
upon applying $\jmath_\zeta$ to the first term of
$\log(\ff_\sigma^p)^{-1}$.
The assertion follows from this and the fact that
$\mathscr{P}_n$ contains $\mathscr{P}_1\ne\emptyset$.
\end{proof}

\noindent
{}From this corollary, we immediately see that the above theorem also gives 
homogeneous functional equations for the rationally extended 
$\ell$-adic polylogarithmic characters.
\begin{Corollary}
\label{tilchi-homogeneous}
Notations being as in Theorem \ref{l-adicpolythm},
let $\tilbchi_k^{z^n, \pi_n(p)}$ and 
$\tilbchi_k^{\zeta z, \eps_\zeta \jmath_\zeta(p)}$ $(\zeta\in\mu_n)$
be the extended
$\ell$-adic polylogarithmic characters. 
Then, we have
$$
\tilbchi_k^{z^n,\pi_n(p)} (\sigma)
=n^{k-1}\sum_{\zeta\in\mu_n}
\tilbchi_k^{\zeta z, \eps_\zeta \jmath_\zeta(p)}(\sigma)
\qquad (\sigma\in G_K).
$$
\end{Corollary}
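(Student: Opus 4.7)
The plan is to derive this corollary by a direct substitution from Theorem~\ref{l-adicpolythm} into the definitional expression of $\tilbchi_k$ in terms of $\ll_j$ and $\brho$, exploiting a clean cancellation of powers of $n$. First, recall from Definition~\ref{defofQellPolylog} that for any $\Q_\ell$-path $q:\ovec{01}\nyoroto w$ on $V_1$ one has
\begin{equation*}
\tilbchi_k^{w,q}(\sigma)
=(-1)^{k+1}(k-1)!
\sum_{j=1}^k
\frac{\brho_{w,q}(\sigma)^{\,k-j}}{(k+1-j)!}
\ll_j(w,q)(\sigma).
\end{equation*}
I would apply this to the two sides of the desired identity (with $w=z^n,q=\pi_n(p)$ on the left, and $w=\zeta z,q=\eps_\zeta\jmath_\zeta(p)$ on the right), and then interchange sums so as to isolate the contribution from each $\zeta\in\mu_n$.

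The first key input is the proof of Theorem~\ref{l-adicpolythm}. Comparing the $X$-coefficient in the identity $\pi_n(\log(\ff_\sigma^p)^{-1})=\log(\ff_\sigma^{\pi_n(p)})^{-1}$ gives
\begin{equation*}
\brho_{z^n,\pi_n(p)}(\sigma)=n\cdot D=n\cdot\brho_{\zeta z,\eps_\zeta\jmath_\zeta(p)}(\sigma),
\end{equation*}
where the second equality is (\ref{6.2eq}), valid for every $\zeta\in\mu_n$. This is reinforced by Corollary~\ref{brhoinvariance}, which asserts that the value $\rho:=\brho_{\zeta z,\eps_\zeta\jmath_\zeta(p)}(\sigma)$ is independent of $\zeta\in\mu_n$. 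Thus $\brho_{z^n,\pi_n(p)}(\sigma)=n\rho$.

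Next I would plug in Theorem~\ref{l-adicpolythm}, namely $\ll_j(z^n,\pi_n(p))(\sigma)=n^{j-1}\sum_{\zeta\in\mu_n}\ll_j(\zeta z,\eps_\zeta\jmath_\zeta(p))(\sigma)$, into the defining expression for $\tilbchi_k^{z^n,\pi_n(p)}(\sigma)$. The crucial arithmetic observation is that the power of $n$ collapses cleanly:
\begin{equation*}
(n\rho)^{k-j}\cdot n^{j-1}=n^{k-1}\rho^{k-j}\qquad (1\le j\le k).
\end{equation*}
Therefore the common factor $n^{k-1}$ pulls out of the $j$-sum, and after exchanging the order of summation the inner sum over $j$ reassembles into exactly $\tilbchi_k^{\zeta z,\eps_\zeta\jmath_\zeta(p)}(\sigma)$, using that $\rho=\brho_{\zeta z,\eps_\zeta\jmath_\zeta(p)}(\sigma)$ for each $\zeta$. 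This yields the asserted identity.

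The only delicate point is therefore not a calculational one but a conceptual consistency check: one must verify that the value of $\brho_{z^n,\pi_n(p)}(\sigma)$ and the values $\brho_{\zeta z,\eps_\zeta\jmath_\zeta(p)}(\sigma)$ (for $\zeta\in\mu_n$) stand in the precise ratio $n:1$ needed to absorb the factor $n^{j-1}$ coming from Theorem~\ref{l-adicpolythm}. Both facts have already been established (the first inside the proof of Theorem~\ref{l-adicpolythm} via comparison of $X$-coefficients, the second as Corollary~\ref{brhoinvariance}), so the corollary follows without further ado.
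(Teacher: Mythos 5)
Your proposal is correct and follows essentially the same route as the paper: the paper's proof likewise combines Theorem~\ref{l-adicpolythm} with Corollary~\ref{brhoinvariance} and the defining formula of Definition~\ref{defofQellPolylog}, and your explicit identification $\brho_{z^n,\pi_n(p)}=n\rho$ (from the $X$-coefficient comparison in the proof of Theorem~\ref{l-adicpolythm}) together with the cancellation $(n\rho)^{k-j}n^{j-1}=n^{k-1}\rho^{k-j}$ is exactly the computation the paper leaves implicit.
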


\begin{proof}
The assertion follows from Theorem \ref{l-adicpolythm}
by applying Corollary \ref{brhoinvariance} to
the definition of 
$\ell$-adic polylogarithmic characters
for $\Q_\ell$-paths (Definition \ref{defofQellPolylog}).
\end{proof}

\section{Translation in Kummer-Heisenberg measure}

Let $\gamma:\ovec{01}\nyoroto z$ be an $\ell$-adic 
path in $\pi_1^\ell(\bold P_{\bK}^1-\{0,1,\infty\};\ovec{01},z)$ and 
$p:=x^{-\frac{s}{n}}\gamma$ be the pro-unipotent path in
$\pi_{\Q_\ell}(\ovec{01},z)$ produced by the composition
with $x^{-\frac{s}{n}}$ for any fixed $s\in\Z_\ell$ and $n\in \mathbb{N}$.
By definition we have 
$\ff_\sigma^p=
x^{-\frac{s}{n}}\ff_\sigma^\gamma x^{\frac{s}{n}\chi(\sigma)}$
for $\sigma\in G_K$. 
Since $x^{-\frac{s}{n}}=\exp(\frac{-s}{n}X)\equiv 1$
modulo the right ideal $X\!\cdot\!\Q_\ell\lala X,Y\rara$, it follows from
Proposition \ref{prop4.1} (ii)
that
\begin{align*}
-\frac{\tilbchi_k^{z,p}(\sigma)}{(k-1)!}
&=\mathsf{Coeff}_{YX^{k-1}}(\ff_\sigma^p) 
=\mathsf{Coeff}_{YX^{k-1}}
\left(1\cdot
\ff_\sigma^\gamma
\cdot\exp\biggl(\frac{s\,\chi(\sigma)}{n}X\biggr)
\right) \\
&=
\sum_{i=0}^{k-1}
\mathsf{Coeff}_{YX^{i}}(\ff_\sigma^\gamma)\cdot
\frac{\left(\frac{s}{n}\chi(\sigma)\right)^{k-i-1}}{(k-i-1)!} 
=-\sum_{i=0}^{k-1}
\frac{\tilchi_{i+1}^{z,\gamma}(\sigma)}{i\,!}\cdot
\frac{\left(\frac{s}{n}\chi(\sigma)\right)^{k-i-1}}{(k-i-1)!} .
\end{align*}
Thus we obtain
\begin{equation}
\label{eq7.1}
\tilbchi_k^{z,p}(\sigma)=
\sum_{i=0}^{k-1}\binom{k-1}{i}
\left(\frac{s}{n}\chi(\sigma)\right)^{k-i-1}
\tilchi_{i+1}^{z,\gamma}(\sigma)
\qquad(\sigma\in G_K).
\end{equation}
Recall then that, 
in \cite{NW1}, introduced is a certain $\Z_\ell$-valued measure
(called the Kummer-Heisenberg measure)
$\bkappa_{z,\gamma}(\sigma)$ on $\Z_\ell$
for every path $\gamma:\ovec{01}\nyoroto z$
and $\sigma\in G_K$, which is characterized by
the integration properties:
\begin{equation}
\label{NW1kummer}
\tilchi_k^{z,\gamma}(\sigma)
=\int_{\Z_\ell}a^{k-1}
d\bkappa_{z,\gamma}(\sigma)(a)
\qquad (k\ge 1).
\end{equation}
Putting this into (\ref{eq7.1}), we may rewrite the RHS
to get
\begin{equation}
\label{eq7.3}
\tilbchi_k^{z,p}(\sigma)=
\int_{\Z_\ell}\biggl(a +\frac{s}{n}\chi(\sigma)\bigg)^{k-1}
d\bkappa_{z,\gamma}(\sigma)(a).
\end{equation}
Note that 
$\frac{s}{n}+\Z_\ell=\frac{s}{n}\chi(\sigma)+\Z_\ell$
as a subset of $\Q_\ell$
when $\mu_n\subset K$. 
Comparison of (\ref{NW1kummer}) and (\ref{eq7.3})
leads us to introduce the following

\begin{Definition}
\label{defbkappa}
Suppose $\mu_n\subset K$, and
let 
$\sigma\in G_K$ and 
$p=x^{-\frac{s}{n}}\gamma\in\pi_{\Q_\ell}(\ovec{01},z)$
be as above. 
Define a $\Z_\ell$-valued measure 
$\bkappa_{z,p}(\sigma)$ 
on the coset $\frac{s}{n}+\Z_\ell(\subset\Q_\ell)$ 
by the property:
\begin{equation*}
\tilbchi_k^{z,p}(\sigma)=
\int_{\frac{s}{n}+\Z_\ell}a^{k-1}
d\bkappa_{z,p}(\sigma)(a)
\qquad (k\ge 1).
\end{equation*}
\end{Definition}

A verification of  
this new notion of the extended measure 
$\bkappa_{z,p}(\sigma)$
is that our distribution relations in 
Corollary \ref{tilchi-homogeneous}
can be summarized into a single relation
of measures:

\begin{Theorem}
For $s\in \Z_\ell$, let 
$[n]:\frac{s}{n}+\Z_\ell\to \Z_\ell$ $(a\mapsto na)$
denote the continuous map of multiplication by $n\in\mathbb{N}$,
and denote by 
$[n]_\ast \bkappa$
the push-forward measure on $\Z_\ell$ 
obtained from any measure $\bkappa$ on $\frac{s}{n}+\Z_\ell$
by
$U\mapsto \bkappa([n]^{-1}(U))$
for the compact open subsets $U$ of $\Z_\ell$.
Then, 
\begin{equation*}
\bkappa_{z^n,\pi_n(\gamma)}(\sigma)
=
\sum_{\zeta\in\mu_n}
[n]_\ast \bkappa_{\zeta z, \eps_\zeta\mathj_\zeta(\gamma)}(\sigma)
\qquad (\sigma\in G_K). 
\end{equation*}
\end{Theorem}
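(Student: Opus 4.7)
The strategy is to reduce the equality of measures to a sequence of moment identities indexed by $k \geq 1$, and then derive each such identity from Corollary \ref{tilchi-homogeneous} via the pushforward formula. Recall that a $\Z_\ell$-valued measure on $\Z_\ell$ (or on a translated lattice coset $\frac{s}{n}+\Z_\ell$) is determined by its integrals against the monomials $a^{k-1}$ $(k \geq 1)$: this follows from Mahler's basis theorem, since the $\Z_\ell$-span of $\{a^{k-1}\}_{k\ge 1}$ is dense in $C(\Z_\ell,\Z_\ell)$, and both $\bkappa_{z^n,\pi_n(\gamma)}(\sigma)$ and the push-forward measures $[n]_\ast\bkappa_{\zeta z,\eps_\zeta\mathj_\zeta(\gamma)}(\sigma)$ are supported on $\Z_\ell$. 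Thus it suffices to verify that the $(k-1)$-st moments of both sides agree for every $k \geq 1$.

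For the left-hand side, I would apply Definition \ref{defbkappa} directly: since $\pi_n(\gamma)$ is a genuine \'etale path (corresponding to $s=0$, so $p=\pi_n(\gamma)$), we have $\tilbchi_k^{z^n,\pi_n(\gamma)}=\tilchi_k^{z^n,\pi_n(\gamma)}$, and hence
\begin{equation*}
\int_{\Z_\ell} a^{k-1}\,d\bkappa_{z^n,\pi_n(\gamma)}(\sigma)(a) = \tilchi_k^{z^n,\pi_n(\gamma)}(\sigma).
\end{equation*}
For the right-hand side, fix $\zeta=\zeta_n^s$ with $0\leq s<n$ so that $\eps_\zeta\mathj_\zeta(\gamma)=x^{-s/n}\cdot\delta_\zeta\mathj_\zeta(\gamma)$ is a $\Q_\ell$-path of the form used in Definition \ref{defbkappa}. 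The change-of-variable for the push-forward along $[n]\colon a\mapsto na$ gives
\begin{equation*}
\int_{\Z_\ell} a^{k-1}\,d\bigl([n]_\ast\bkappa_{\zeta z,\eps_\zeta\mathj_\zeta(\gamma)}(\sigma)\bigr)(a)
= n^{k-1}\!\int_{\frac{s}{n}+\Z_\ell} a^{k-1}\,d\bkappa_{\zeta z,\eps_\zeta\mathj_\zeta(\gamma)}(\sigma)(a)
= n^{k-1}\,\tilbchi_k^{\zeta z,\eps_\zeta\mathj_\zeta(\gamma)}(\sigma).
\end{equation*}

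Summing over $\zeta\in\mu_n$ and invoking Corollary \ref{tilchi-homogeneous} yields
\begin{equation*}
\sum_{\zeta\in\mu_n} n^{k-1}\,\tilbchi_k^{\zeta z,\eps_\zeta\mathj_\zeta(\gamma)}(\sigma)
=\tilbchi_k^{z^n,\pi_n(\gamma)}(\sigma)=\tilchi_k^{z^n,\pi_n(\gamma)}(\sigma),
\end{equation*}
which matches the left-hand moment and closes the loop. The cosets $\frac{s}{n}+\Z_\ell$ for $s=0,\dots,n-1$ map under $[n]$ to the disjoint residue classes $s+n\Z_\ell$ partitioning $\Z_\ell$, so the sum of push-forwards is well-defined as a $\Z_\ell$-valued measure. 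The main technical point to be careful about is not any deep computation but rather the verification that the bookkeeping with $\Q_\ell$-paths is consistent: one must check that the measure produced by Definition \ref{defbkappa} on $\frac{s}{n}+\Z_\ell$ is actually $\Z_\ell$-valued (so that after push-forward one legitimately obtains a $\Z_\ell$-valued measure on $\Z_\ell$), and this is exactly the content of identity \eqref{eq7.3} together with the integrality of $\bkappa_{z,\gamma}(\sigma)$ established in \cite{NW1}.
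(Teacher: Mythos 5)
Your proof is correct and follows essentially the same route as the paper, which likewise deduces the measure identity from Corollary \ref{tilchi-homogeneous} together with the moment characterization of Definition \ref{defbkappa} and (\ref{eq7.3}) --- the paper's proof is the one-line version of your argument, and your expansion (moment comparison, change of variables under $[n]_\ast$, identification of $\tilbchi_k^{z^n,\pi_n(\gamma)}$ with $\tilchi_k^{z^n,\pi_n(\gamma)}$ for the genuine path) is the intended one. One small imprecision: the $\Z_\ell$-span of the monomials $a^{k-1}$ is \emph{not} dense in $C(\Z_\ell,\Z_\ell)$ (the Mahler basis $\binom{a}{k}$ carries denominators); the correct justification that a $\Z_\ell$-valued measure is determined by its moments is that such a measure is bounded, hence a continuous linear functional on $C(\Z_\ell,\Q_\ell)$, in which the $\Q_\ell$-span of the monomials is dense.
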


\begin{proof}
The formula follows immediately from Corollary \ref{tilchi-homogeneous}
and the characteristic property (\ref{eq7.3})
of the Kummer-Heisenberg measure.
\end{proof}

\begin{Question}
In the above discussion, we defined $\bkappa_{z,p}(\sigma)$
only for $\Q_\ell$-paths $p:\ovec{01}\nyoroto z$ of the form
$p=x^{\alpha}\gamma$ with $\alpha\in\Q_\ell$ and 
$\gamma:\ovec{01}\nyoroto z$ 
being $\ell$-adic (i.e., $\Z_\ell$-integral) paths.
It is natural to conjecture existence of a suitable measure 
$\bkappa_{z,p}(\sigma)$ for a more general $\Q_\ell$-path $p:\ovec{01}\nyoroto z$
satisfying the property of Definition \ref{defbkappa}.
The support of this measure 
should be a parallel transport $R(p,\sigma)$ of $\Z_\ell$ in $\Q_\ell$
such that $x^{R(p,\sigma)}\subset x^{\Q_\ell}$ is the image
of $\pi_1^\ell(V_1\otimes \overline{K};\ovec{01},z)\cdot \sigma(p)^{-1}$
via the projection $\pi_{\Q_\ell}(\ovec{01}) \twoheadrightarrow x^{\Q_\ell}$.
\end{Question}

\section{Inspection of special cases}

In this section, we shall closely look at special cases of 
the $\ell$-adic distribution formula.
Let us first consider dilogarithms, i.e., for the case of $k=2$.
By Theorem \ref{MainFormulaInhomo}, we have
\begin{Corollary}
Let $\mu_n\subset K$ and $\gamma:\ovec{01}\nyoroto z\in 
V_n(K)$ be an $\ell$-adic path which induces paths
$\pi_n(\gamma):\ovec{01}\nyoroto z^n$ and 
$\delta_\zeta\mathj_\zeta(\gamma):\ovec{01}\nyoroto \zeta z$
($\zeta=\zeta_n^s\in\mu_n$) on $V_1=\bold P^1-\{0,1,\infty\}$. 
Along these paths, we have the following $\Z_\ell$-valued
functional equation
$$
\tilchi_2^{z^n}(\sigma)
=
n\sum_{s=0}^{n-1}\tilchi_2^{\zeta_n^{\,s} z}(\sigma)
+\sum_{s=1}^{n-1} s \chi(\sigma)\rho_{1-\zeta_n^{\,s}z}(\sigma)
\qquad(\sigma\in G_K),
$$
where $\rho_{1-\zeta_n^{\,s}z}$ is the same as
the 1st polylogarithmic character $\tilchi_1^{\zeta_n^{\,s} z}:G_K\to \Z_\ell$.
\qed
\end{Corollary}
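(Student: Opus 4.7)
The plan is to derive this corollary as a direct specialization of Theorem \ref{MainFormulaInhomo} at $k=2$, supplemented by identifying the degree-one polylogarithmic character with a Kummer cocycle.

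First I would set $k=2$ in the formula of Theorem \ref{MainFormulaInhomo}, so that the inner index $d$ runs only through $\{1,2\}$. The $d=2$ contribution carries the coefficient $\binom{1}{1}n^{1}=n$ together with the factor $(s\chi(\sigma))^{0}=1$ (via the stated convention $0^{0}=1$), producing the homogeneous term $n\sum_{s=0}^{n-1}\tilchi_{2}^{\zeta_{n}^{s}z}(\sigma)$. The $d=1$ contribution carries the coefficient $\binom{1}{0}n^{0}=1$ together with the factor $(s\chi(\sigma))^{1}$; the $s=0$ summand vanishes automatically, leaving $\sum_{s=1}^{n-1} s\chi(\sigma)\,\tilchi_{1}^{\zeta_{n}^{s}z}(\sigma)$.

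Next I would identify $\tilchi_{1}^{w}(\sigma)=\rho_{1-w}(\sigma)$ for $w=\zeta_{n}^{s}z$. Reading the defining Kummer property (\ref{defpolychar}) at $m=1$, the exponents $a^{m-1}/\ell^{n}$ collapse to $1/\ell^{n}$, and the cyclotomic factorization $\prod_{a=0}^{\ell^{n}-1}(1-\zeta_{\ell^{n}}^{a}W)=1-W^{\ell^{n}}$ reduces both the numerator and the denominator product to a chosen $\ell^{n}$-th root of $1-w$ transported along $\gamma$. What remains is exactly the Kummer $1$-cocycle for $1-w$ along the given path, and applying this at $w=\zeta_{n}^{s}z$ turns the $d=1$ tail into the lower-degree contribution stated in the corollary.

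Finally I would record integrality: the polylogarithmic characters $\tilchi_{k}^{\bullet}$ and $\rho_{1-\bullet}$ are by construction $\Z_{\ell}$-valued, while $\chi(\sigma)\in\Z_{\ell}^{\times}$ and $s\in\Z$, so the identity lives in $\Z_{\ell}$ as asserted. There is no genuine obstacle; the only care needed is in the bookkeeping around the convention $0^{0}=1$ for the $d=2$ term versus the automatic vanishing of the $s=0$ index in the $d=1$ term, and in verifying that the path choices on both sides are exactly those stipulated in Theorem \ref{MainFormulaInhomo}.
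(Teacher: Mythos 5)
Your proposal is correct and follows exactly the paper's route: the paper derives this corollary by simply specializing Theorem \ref{MainFormulaInhomo} to $k=2$, with the $d=2$ term giving the homogeneous part and the $d=1$ term giving the $\rho_{1-\zeta_n^s z}$ tail. Your additional verification that $\tilchi_1^{w}=\rho_{1-w}$ via the $m=1$ case of (\ref{defpolychar}) and the factorization $\prod_{a}(1-\zeta_{\ell^n}^{a}W)=1-W^{\ell^n}$ is a correct elaboration of an identification the paper takes as known.
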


In particular when $n=2$, the above formula is specialized to
the following. 
\begin{Corollary}
\label{coro7.2}
For $\gamma:\ovec{01}\nyoroto z$ on $V_2=\bold P^1-\{0,\pm 1,\infty\}$,
let $\pi_2(\gamma):\ovec{01}\nyoroto z^2$,
$\mathj_1(\gamma):\ovec{01}\nyoroto z$ and
$\delta_{-1}\mathj_{-1}(\gamma):\ovec{01}\nyoroto -z$
be the induced paths on $\bold P^1-\{0,1,\infty\}$.
Note here that 
$\delta_{-1}:\ovec{01}\nyoroto-\ovec{01}$ is the positive
half rotation.
Along these paths, we have a functional equation of
the $\ell$-adic polylogarithmic characters
$$
\tilchi_2^{z^2}(\sigma)
=
2(\tilchi_2^z(\sigma)+\tilchi_2^{-z}(\sigma))
+\chi(\sigma)\rho_{1+z}(\sigma)
\qquad (\sigma\in G_K).
\qed
$$
\end{Corollary}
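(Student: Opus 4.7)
The plan is to obtain Corollary \ref{coro7.2} as a direct specialization of the preceding Corollary by setting $n=2$. First I would unpack $\mu_2=\{1,-1\}$ with $\zeta_2=e^{\pi i}=-1$, so the index $s$ in the statement of the previous Corollary ranges over $s=0,1$ in the first sum and over $s=1$ in the second sum. The path $\delta_{\zeta_2}=\delta_{-1}:\ovec{01}\nyoroto -\ovec{01}$ is, by the definition given in the \textbf{Basic setup} paragraph, the anti-clockwise arc from the base vector to $-\ovec{01}$, i.e.\ the positive half rotation; this identifies the path system $\delta_{-1}\mathj_{-1}(\gamma):\ovec{01}\nyoroto -z$ used in the statement.

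Next I would substitute term by term. For $s=0$ ($\zeta=1$), the associated path on $V_1$ is $\mathj_1(\gamma):\ovec{01}\nyoroto z$, contributing $\tilchi_2^{z}(\sigma)$ to the first sum. For $s=1$ ($\zeta=-1$), the path is $\delta_{-1}\mathj_{-1}(\gamma):\ovec{01}\nyoroto -z$, contributing $\tilchi_2^{-z}(\sigma)$. The prefactor $n=2$ then yields the term $2(\tilchi_2^{z}(\sigma)+\tilchi_2^{-z}(\sigma))$.

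For the remaining sum $\sum_{s=1}^{n-1} s\,\chi(\sigma)\rho_{1-\zeta_n^s z}(\sigma)$, only $s=1$ survives, producing $1\cdot\chi(\sigma)\rho_{1-(-1)z}(\sigma)=\chi(\sigma)\rho_{1+z}(\sigma)$, where $\rho_{1+z}$ is the Kummer $1$-cocycle along the path $\delta_{-1}\mathj_{-1}(\gamma)$, and agrees with the first $\ell$-adic polylogarithmic character $\tilchi_1^{-z}$ along that path. Assembling these pieces yields the asserted identity
$$
\tilchi_2^{z^2}(\sigma)=2\bigl(\tilchi_2^{z}(\sigma)+\tilchi_2^{-z}(\sigma)\bigr)+\chi(\sigma)\rho_{1+z}(\sigma).
$$

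There is essentially no obstacle here beyond careful bookkeeping: the content was already carried by Theorem \ref{MainFormulaInhomo} and the preceding Corollary. The only point that deserves an extra sentence is that $K\supset\mu_2$ is automatic (any subfield of $\C$ contains $-1$), so the hypothesis needed to invoke the previous Corollary is vacuous in this case, and the whole identity holds on all of $G_K$ with values in $\Z_\ell$ since the three polylogarithmic characters involved are each $\Z_\ell$-valued by construction.
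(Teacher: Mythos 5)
Your proposal is correct and coincides with the paper's own (implicit) argument: Corollary \ref{coro7.2} is obtained exactly by setting $n=2$ in the preceding dilogarithm corollary, with $\zeta_2=-1$, the $d=2$ part giving $2(\tilchi_2^{z}+\tilchi_2^{-z})$ and the single surviving $s=1$ term giving $\chi(\sigma)\rho_{1+z}(\sigma)$. The paper states this as an immediate specialization with no further proof, and your bookkeeping (including the remarks on $\delta_{-1}$ and the vacuity of $\mu_2\subset K$) is accurate.
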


\noindent
Putting $z=\ovec{10}$ in the above, and recalling
$\tilchi_{2k}^{\ovec{10}}(\sigma)=
\frac{B_{2k}}{2(2k)}(\chi(\sigma)^{2k}-1)$
$(\sigma\in G_\Q)$ from [NW2] Proposition 5.13,
we immediately obtain

\begin{Corollary}
\label{dilogspecial}
Along the path $\gamma_{-1}:\ovec{01}\nyoroto (z\!=\!1) \nyoroto (z\!=\!-1)$ 
induced by the positive half arc on the unit circle on 
$\bold P^1-\{0,1,\infty\}$, we have
the following $\Z_\ell$-valued equation: 
$$
\tilchi_2^{z=-1}(\sigma)=-\frac{\chi(\sigma)^2-1}{48}
-\frac{1}{2}\chi(\sigma)\rho_2(\sigma)
\qquad (\sigma\in G_\Q).
\qed
$$
\end{Corollary}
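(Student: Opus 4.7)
The plan is to specialize Corollary \ref{coro7.2} to the tangential base point $z=\ovec{10}$ and combine the resulting identity with the closed-form value $\tilchi_2^{\ovec{10}}(\sigma)=\frac{B_2}{4}\bigl(\chi(\sigma)^2-1\bigr)=\frac{1}{24}\bigl(\chi(\sigma)^2-1\bigr)$ recorded in \cite[Proposition~5.13]{NW2}.

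Concretely, I would take for $\gamma:\ovec{01}\nyoroto \ovec{10}$ on $V_2=\bold P^1-\{0,\pm1,\infty\}$ the real-segment path along $(0,1)$. Under the three morphisms appearing in Corollary \ref{coro7.2}, both $\pi_2(\gamma)$ and $\mathj_1(\gamma)$ descend to the real-segment path $\ovec{01}\nyoroto\ovec{10}$ on $V_1$ (since the derivative of $z\mapsto z^2$ at $z=1$ equals $2$, preserving the tangential base point $\ovec{10}$), while $\delta_{-1}\mathj_{-1}(\gamma)$ begins with the upper semicircle $\delta_{-1}:\ovec{01}\nyoroto -\ovec{01}$ and continues along $(-1,0)$ to $-\ovec{10}$. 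I would then verify that $\delta_{-1}\mathj_{-1}(\gamma)$ represents the same \'etale path class as $\gamma_{-1}$ in $\pi_1^\ell(V_1;\ovec{01},-\ovec{10})$: both stay in the upper half plane (which is simply connected after removing the punctures $0$ and $1$ from the real axis), hence they are homotopic and terminate at the same tangential base point at $-1$.

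Next I would substitute these identifications into Corollary \ref{coro7.2}, obtaining
\[
\tilchi_2^{\ovec{10}}(\sigma)
=2\bigl(\tilchi_2^{\ovec{10}}(\sigma)+\tilchi_2^{-\ovec{10}}(\sigma)\bigr)
+\chi(\sigma)\,\rho_{1+\ovec{10}}(\sigma),
\]
where $\rho_{1+\ovec{10}}=\rho_2$ because $1+\ovec{10}$ is a tangential base point at $2$. Inserting the value of $\tilchi_2^{\ovec{10}}$ and solving for $\tilchi_2^{-\ovec{10}}$ gives
\[
\tilchi_2^{-\ovec{10}}(\sigma)
=-\tfrac{1}{2}\tilchi_2^{\ovec{10}}(\sigma)-\tfrac{1}{2}\chi(\sigma)\,\rho_2(\sigma)
=-\frac{\chi(\sigma)^2-1}{48}-\tfrac{1}{2}\chi(\sigma)\,\rho_2(\sigma),
\]
which is the claimed identity once $\tilchi_2^{-\ovec{10}}$ along $\delta_{-1}\mathj_{-1}(\gamma)$ is identified with $\tilchi_2^{z=-1}$ along $\gamma_{-1}$.

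The main obstacle will be the careful bookkeeping of tangential base points: matching the tangent directions at $z=-1$ for the two upper-half-plane paths, and verifying that the Kummer character $\rho_{1+\ovec{10}}$ coincides with the standard $\rho_2$ along the path on $V_1$ induced from $\delta_{-1}\mathj_{-1}(\gamma)$ by the shift $z\mapsto 1+z$. Everything else is formal algebra from Corollary \ref{coro7.2} and the explicit Bernoulli evaluation of $\tilchi_2^{\ovec{10}}$.
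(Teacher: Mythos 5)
Your proposal is correct and is essentially the paper's own derivation: the corollary is obtained by putting $z=\ovec{10}$ in Corollary \ref{coro7.2} and inserting $\tilchi_2^{\ovec{10}}(\sigma)=\frac{B_2}{4}(\chi(\sigma)^2-1)=\frac{1}{24}(\chi(\sigma)^2-1)$ from \cite[Prop.~5.13]{NW2}, then solving for $\tilchi_2^{z=-1}$. (The paper also records an independent, purely arithmetic verification via the Kummer characterization (\ref{defpolychar}), but that is a consistency check rather than the main proof.)
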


\noindent
This result is an $\ell$-adic analog of 
the classical result 
$Li_2(-1)=-\frac{\pi^2}{12}$ ([Le]), and is 
compatible with 
\cite[Remark 5.14 and Remark after (6.31)]{NW2}.

\medskip
To confirm validity of our above narrow stream 
of geometrical arguments toward Corollary \ref{dilogspecial},
we here present an alternative direct proof 
in a purely arithmetic way as below:

\medskip
{\bf Arithmetic proof of Corollary \ref{dilogspecial}}.
\nolinebreak
We (only) make use of the characterization of
$\tilchi_m^z$ by the Kummer properties 
(\ref{defpolychar}).
Applying it to our case $m=2$, $z=-1$ where 
$\rho_z(\sigma)=\frac{1}{2}(\chi(\sigma)-1)$, 
we obtain
\begin{equation*}
\zeta_{\ell^n}^{\tilchi_2^{z=-1}(\sigma)}=
\sigma\left(
\prod_{a=0}^{\ell^n-1}
(1-\zeta_{2\ell^n}^{2\chi(\sigma)^{-1}a+1})^{\frac{a}{\ell^n}}\right)
\Bigm/
\prod_{a=0}^{\ell^n-1}
(1-\zeta_{2\ell^n}^{2a+\chi(\sigma)})^{\frac{a}{\ell^n}}
\tag{$\ast$}.
\end{equation*}
We evaluate both the denominator and numerator of 
the above right hand side, first by pairing two factors
indexed by $a$ and $a'=-\chi(\sigma)-a$ and by simplifying
their product by
\begin{equation*}
\left(
1-\zeta_{2\ell^n}^{-2a-\chi(\sigma)}
\right)^{\frac{1}{\ell^n}}
=
\left(
1-\zeta_{2\ell^n}^{2a+\chi(\sigma)}
\right)^{\frac{1}{\ell^n}}
\cdot
\zeta_{2\ell^n}^{l^n-\la 2a+\chi(\sigma) \ra}
\end{equation*}
with $0\le \la 2a+\chi(\sigma) \ra \le 2\ell^n$ 
being the unique residue of 
$2a+\chi(\sigma)$ mod $2\ell^n$.
Pick a disjoint decomposition of the index set 
$S:=\{0\le a\le \ell^n-1\}$ into
$S_+\cup S_-\cup S_0$ so that,
for all $a\in S$,

(i) $a\in S_+$ iff $\la -\chi(\sigma)-a\ra\in S_-$;

(ii) $a\in S_0$ iff $a\equiv -a-\chi(\sigma)$ mod $\ell^n$.

\noindent
Then, one finds:
\begin{align*}
\prod_{a\in S-S_0}
(1-\zeta_{2\ell^n}^{2\chi(\sigma)^{-1}a+1})
^{\frac{a}{\ell^n}}
&=
\prod_{a\in S_\pm}
(1-\zeta_{2\ell^n}^{2\chi(\sigma)^{-1}a+1})
^{\frac{-\chi(\sigma)}{\ell^n}}
\zeta_{2\ell^n}
^{(\ell^n-\la 1+2\chi(\sigma)^{-1}a\ra)(-a-\chi(\sigma))}, 
\\
\prod_{a\in S-S_0}
(1-\zeta_{2\ell^n}^{2a+\chi(\sigma)})
^{\frac{a}{\ell^n}}
&=
\prod_{a\in S_\pm}
(1-\zeta_{2\ell^n}^{2a+\chi(\sigma)})
^{\frac{-\chi(\sigma)}{\ell^n}}
\zeta_{2\ell^n}
^{(\ell^n-\la 2a+\chi(\sigma))(-a-\chi(\sigma))}.
\end{align*}
Noting that $\prod_{a\in S}(1-\zeta_{2\ell^n}^{2a+1})=2$,
we obtain the squared sides of $(\ast)$ as
\begin{equation*}
\zeta_{\ell^n}^{2\tilchi_2^{z=-1}(\sigma)}
=\frac{
\sigma\Bigl(
2^{\frac{-\chi(\sigma)}{\ell^n}}
\prod_{a\in S} 
\zeta_{2\ell^n}
^{(\ell^n-\la 1+2\chi(\sigma)^{-1}a\ra)(-a-\chi(\sigma))}
\Bigr)}
{2^{\frac{-\chi(\sigma)}{\ell^n}}
\prod_{a\in S} 
\zeta_{2\ell^n}
^{(\ell^n-\la 2a+\chi(\sigma))(-a-\chi(\sigma))}
}.
\end{equation*}
Here, note that contribution from $S_0$ 
(which is empty when $\ell=2$) is included 
into the factor
$2^{\frac{-\chi(\sigma)}{\ell^n}}$ 
both in the numerator and the denominator.
Now, choose integers $c,\bar{c}\in\Z$ so that
$c\equiv \chi(\sigma)$, $c \bar{c}\equiv 1$
mod $2\ell^n$. Then,
we obtain the following congruence equation
mod $\ell^n$:
\begin{align*}
2\tilchi_2^{z=-1}(\sigma)
&\equiv
-\chi(\sigma)\rho_2(\sigma)
+
\frac{1}{2}\sum_{a\in S}
\chi(\sigma)(-a-c)(\ell^n-\la 1+\bar{c}a\ra)
-(-a-c)(\ell^n-\la 1+2\bar{c}a\ra) \\
&\equiv
-\chi(\sigma)\rho_2(\sigma)
+
\frac{1}{2}\sum_{a\in S}
(-a-c)\left[
\frac{\chi(\sigma)-1}{2}
+\left\{\frac{2a+c}{2\ell^n}\right\}
-c\left\{\frac{1+2\bar{c}a}{2\ell^n}\right\}
\right] \\
&\equiv
-\chi(\sigma)\rho_2(\sigma)
+
\frac{1}{2}\sum_{b\in S}
b\left[
c\left\{\frac{1+2\bar{c}b}{2\ell^n}\right\}
-\left\{\frac{c+2b}{2\ell^n}\right\}
+\frac{1-c}{2}
\right].
\end{align*}
By basic properties of the Bernoulli polynomial
$B_2(X)=X^2-X+\frac{1}{6}$ (cf.~\cite{La}),
the last sum is congruent modulo
$\frac{\ell^n}{48}\Z$ to
\begin{align*}
&\sum_{b\in S}\frac{\ell^n}{2}
\left[
c^2 B_2\left
(\left\{\frac{1+2\bar{c}b}{2\ell^n}\right\}\right)
-B_2\left(\left\{\frac{2b+c}{2\ell^n}\right\}\right)
\right] \\
&=\frac{1}{2}(\chi(\sigma)^2-1)
B_2(\frac{1}{2})
=-\frac{1}{24}{\chi(\sigma)^2-1}.
\end{align*}
Summing up, we find the congruence relations
$$
2\tilchi_2^{z=-1}(\sigma)
\equiv
-\chi(\sigma)\rho_2(\sigma)
-\frac{1}{24}(\chi(\sigma)^2-1)
\quad \text{ mod }\frac{\ell^n}{48}\Z
$$
for all $n$, hence the equality in $\Z_\ell$. 
This concludes the proof of the corollary.
\qed

\medskip
Turning to Theorem \ref{MainFormulaInhomo},
by specialization to 
the case $n=2$ (but for general $k$), we obtain:

\begin{Corollary}
Along the paths from $\ovec{01}$ to $\pm z, z^2$ 
on $\mathbf{P}^1-\{0,1,\infty\}$
used in
Corollary \ref{coro7.2}, it holds that
$$
\tilchi_k^{z^2}(\sigma)=
2^{k-1}\tilchi_k^z(\sigma)
+
\sum_{d=1}^k\binom{k-1}{d-1}2^{d-1}
\chi(\sigma)^{k-d}\tilchi_d^{-z}(\sigma)
\qquad
(k\ge 1,\ \sigma\in G_K).
$$
\end{Corollary}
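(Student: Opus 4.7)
The plan is to derive this corollary as the direct specialization $n=2$ of Theorem \ref{MainFormulaInhomo}. First I would observe that $\zeta_2=e^{2\pi i/2}=-1$, so the inner sum $\sum_{s=0}^{n-1}$ in Theorem \ref{MainFormulaInhomo} collapses to two terms, corresponding to the endpoints $\zeta_2^0 z=z$ and $\zeta_2^1 z=-z$, reached along the paths $\mathj_1(\gamma)$ and $\delta_{-1}\mathj_{-1}(\gamma)$ that already appear in Corollary \ref{coro7.2}. Hence the theorem gives
\begin{equation*}
\tilchi_k^{z^2}(\sigma)
=
\sum_{d=1}^k \binom{k-1}{d-1} 2^{d-1}
\Bigl[
(0\cdot\chi(\sigma))^{k-d}\tilchi_d^{z}(\sigma)
+(\chi(\sigma))^{k-d}\tilchi_d^{-z}(\sigma)
\Bigr].
\end{equation*}

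Next I would separate the $s=0$ contribution from the $s=1$ contribution. Using the convention $0^0=1$ fixed in the statement of Theorem \ref{MainFormulaInhomo}, the factor $(0\cdot\chi(\sigma))^{k-d}$ vanishes for every $d<k$ and equals $1$ when $d=k$. Consequently only the top term $d=k$ survives from the $s=0$ summand, and its coefficient simplifies to $\binom{k-1}{k-1}\,2^{k-1}=2^{k-1}$, yielding the term $2^{k-1}\tilchi_k^z(\sigma)$ of the desired formula. The $s=1$ contribution, where no collapse occurs, produces precisely
\begin{equation*}
\sum_{d=1}^k\binom{k-1}{d-1}\,2^{d-1}\,\chi(\sigma)^{k-d}\,\tilchi_d^{-z}(\sigma).
\end{equation*}

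Adding the two contributions gives the claimed identity. There is essentially no main obstacle: the result is obtained by unwinding Theorem \ref{MainFormulaInhomo} in the smallest nontrivial case. The only bookkeeping point to check carefully is the identification of the path systems, namely that the specialization of the ``$\zeta_n^s z$ along $\delta_{\zeta_n^s}\mathj_{\zeta_n^s}(\gamma)$'' in Theorem \ref{MainFormulaInhomo} matches exactly the paths $\mathj_1(\gamma):\ovec{01}\nyoroto z$ and $\delta_{-1}\mathj_{-1}(\gamma):\ovec{01}\nyoroto -z$ fixed in Corollary \ref{coro7.2}; this is immediate from $\delta_1$ being the trivial path and $\zeta_2=-1$.
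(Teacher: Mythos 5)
Your proposal is correct and coincides with the paper's own (implicit) argument: the corollary is stated there precisely as the specialization $n=2$ of Theorem \ref{MainFormulaInhomo}, with the $s=0$ term collapsing to $2^{k-1}\tilchi_k^z(\sigma)$ via the convention $0^0=1$ and the $s=1$ term giving the remaining sum. Your bookkeeping of the path systems ($\delta_1$ trivial, $\zeta_2=-1$) also matches the paths fixed in Corollary \ref{coro7.2}.
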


\noindent
Upon observing special cases of the above formula, 
we find that $\tilchi_4^{z=-1}$ does not factor through
$\Gal(\Q(\mu_{\ell^\infty})/\Q)$, because it involves
a nontrivial term from $\tilchi_3^{\ovec{10}}(\sigma)$ which does
not vanish on $\Gal(\bQ/\Q(\mu_{\ell^\infty}))$
by Soul\'e [So].

\medskip
With regard to the classical formula 
$Li_{2k}(-1)=(-1)^{k+1}(1-2^{2k-1})B_{2k}\frac{\pi^{2k}}{(2k)!}$ ([Le]), 
we should rather figure out its $\ell$-adic analog 
in terms of
the ``$\Q_\ell$-adic''
polylogarithmic characters introduced in Definition \ref{defofQellPolylog}.
In fact,

\begin{Corollary}
Let $\gamma_{-1}:\ovec{01}\nyoroto (z\!=\!-1)$ be the path
in Corollary \ref{dilogspecial}. Then, along
the $\Q_\ell$-path
$x^{-\frac{1}{2}}\gamma_{-1}: \ovec{01}\nyoroto (z\!=\!-1)$,
it holds that
\begin{equation*}
\tilbchi_{2k}^{z=-1}(\sigma)=
\frac{(1-2^{2k-1})}{2^{2k}}\frac{B_{2k}}{2k}
(\chi(\sigma)^{2k}-1)
\qquad (\sigma\in G_\Q).
\end{equation*}
\end{Corollary}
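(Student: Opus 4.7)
The plan is to combine the $\ell$-adic distribution formula for general $k$ and $n=2$ (the Corollary just above) at the tangential base point $\ovec{10}$ with the Kummer-path twist identity~(\ref{eq7.1}), which relates $\tilbchi_{2k}^{-1,x^{-1/2}\gamma_{-1}}$ to the $\tilchi_d^{-1,\gamma_{-1}}$; the closed-form value $\tilchi_{2k}^{\ovec{10}}(\sigma)=\frac{B_{2k}}{4k}(\chi(\sigma)^{2k}-1)$ from \cite[Prop.\,5.13]{NW2} then supplies the Bernoulli number. This is essentially the strategy used in the passage Corollary~\ref{coro7.2}~$\Rightarrow$~Corollary~\ref{dilogspecial}, now executed for all even degrees and adapted to the $\Q_\ell$-path $x^{-1/2}\gamma_{-1}$.

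First, I would take the real segment $\gamma:\ovec{01}_2\nyoroto\ovec{10}_2$ on $V_2=\mathbf{P}^1-\{0,\pm 1,\infty\}$ and degenerate the preceding Corollary to the tangential endpoint $z=\ovec{10}_2$. The induced paths on $V_1$ are $\pi_2(\gamma)$ and $\mathj_1(\gamma)$, both homotopic to the real segment $\ovec{01}\nyoroto\ovec{10}$, and $\delta_{-1}\mathj_{-1}(\gamma):\ovec{01}\nyoroto -\ovec{10}$ (a small positive arc around $0$ followed by the negative real segment). Since $-1$ is an ordinary (non-puncture) point of $V_1$, the loop formed by $\delta_{-1}\mathj_{-1}(\gamma)$ and the reverse of $\gamma_{-1}$ bounds a puncture-free region of the upper half plane, so the two paths are homotopic in $V_1(\C)$; hence $\tilchi_d^{-\ovec{10},\delta_{-1}\mathj_{-1}(\gamma)}(\sigma)=\tilchi_d^{-1,\gamma_{-1}}(\sigma)$. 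Sending both $z$ and $z^2$ to $\ovec{10}$ in the preceding Corollary therefore collapses it to
\begin{equation*}
(1-2^{k-1})\,\tilchi_k^{\ovec{10}}(\sigma)=\sum_{d=1}^{k}\binom{k-1}{d-1}2^{d-1}\chi(\sigma)^{k-d}\,\tilchi_d^{-1,\gamma_{-1}}(\sigma)\qquad(k\ge 1).
\end{equation*}

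Next, I would apply (\ref{eq7.1}) with $s=1$, $n=2$ and $\gamma=\gamma_{-1}$, obtaining
\begin{equation*}
\tilbchi_{2k}^{-1,x^{-1/2}\gamma_{-1}}(\sigma)=\frac{1}{2^{2k-1}}\sum_{d=1}^{2k}\binom{2k-1}{d-1}2^{d-1}\chi(\sigma)^{2k-d}\,\tilchi_d^{-1,\gamma_{-1}}(\sigma).
\end{equation*}
Substituting the Step~1 identity with $k$ replaced by $2k$ into the right-hand side yields $\tilbchi_{2k}^{-1,x^{-1/2}\gamma_{-1}}(\sigma)=\frac{1-2^{2k-1}}{2^{2k-1}}\,\tilchi_{2k}^{\ovec{10}}(\sigma)$, and inserting the known value of $\tilchi_{2k}^{\ovec{10}}$ produces the claimed formula after elementary simplification. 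The only delicate point is Step~1: justifying the degeneration $z\to\ovec{10}_2$ in the general-$k$ distribution formula (the very specialization underlying the proof of Corollary~\ref{dilogspecial}) and the homotopy identification of $\delta_{-1}\mathj_{-1}(\gamma)$ with $\gamma_{-1}$ on $V_1(\C)$; once these are dispatched, the rest is pure combinatorial bookkeeping.
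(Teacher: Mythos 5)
Your proposal is correct, and it reaches the stated formula by a route that differs from the paper's in a meaningful way. The paper applies the homogeneous distribution relation of Corollary \ref{tilchi-homogeneous} (with $n=2$) directly to the straight $\Q_\ell$-path $p:\ovec{01}\nyoroto\ovec{10}$ on $V_2$, identifies $\tilbchi_k^{\ovec{10},\pi_2(p)}$ and $\tilbchi_k^{\ovec{10},\mathj_1(p)}$ with the Soul\'e value $\tilchi_k^{\ovec{10}}$, and solves for $\tilbchi_k^{z=-1,x^{-1/2}\gamma_{-1}}$ in one line; the Bernoulli value from \cite[Prop.\,5.13]{NW2} then finishes it. You instead stay with the \emph{inhomogeneous} relation (the $n=2$ specialization of Theorem \ref{MainFormulaInhomo}), degenerate it at $z=\ovec{10}$ to get $(1-2^{k-1})\tilchi_k^{\ovec{10}}=\sum_{d}\binom{k-1}{d-1}2^{d-1}\chi^{k-d}\tilchi_d^{-1,\gamma_{-1}}$, and then convert via the twist identity (\ref{eq7.1}) with $s=1$, $n=2$; since (\ref{eq7.1}) produces exactly the same binomial combination divided by $2^{k-1}$, the two steps splice together to give $\tilbchi_k^{-1,x^{-1/2}\gamma_{-1}}=\frac{1-2^{k-1}}{2^{k-1}}\tilchi_k^{\ovec{10}}$, which agrees with the paper's intermediate identity. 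In effect you re-derive the $n=2$ case of Corollary \ref{tilchi-homogeneous} from Theorem \ref{MainFormulaInhomo} together with (\ref{eq7.1}), bypassing the Lie-algebra argument of Theorem \ref{l-adicpolythm} and Corollary \ref{brhoinvariance}; this is longer but makes explicit how the lower-degree terms of the inhomogeneous formula are absorbed by the $x^{-1/2}$-twist, which is a useful consistency check between \S5 and \S7. The two points you flag as delicate --- the degeneration to the tangential point $\ovec{10}$ and the homotopy $\delta_{-1}\mathj_{-1}(\gamma)\simeq\gamma_{-1}$ on $V_1(\C)$ --- are exactly the identifications the paper also makes (implicitly, citing \cite[Remark 2]{NW1} for the Soul\'e values and setting up $\gamma_{-1}$ in Corollaries \ref{coro7.2} and \ref{dilogspecial}), so they do not constitute a gap relative to the paper's own standard of rigor.
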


\begin{proof}
{}
Applying Corollary \ref{tilchi-homogeneous}  
to the case where $n=2$ and $p:\ovec{01}\nyoroto \ovec{10}$ is the
straight path on $V_2=\mathbf{P}^1-\{0,\pm 1,\infty\}$, we obtain
$$
\tilbchi_{k}^{\ovec{10},\pi_2(p)}(\sigma)=
2^{k-1}(\tilbchi_{k}^{\ovec{10},\mathj_1(p)}(\sigma)+\tilbchi_{k}^{z=-1,\gamma_{-1}}(\sigma)).
$$
Since $\pi_2(p)$ and $\mathj_1(p)$ are the same standard path 
$\ovec{01}\nyoroto \ovec{10}$ on $V_1$, the values 
$\tilbchi_{k}^{\ovec{10},\pi_2(p)}(\sigma)$ and $\tilbchi_{k}^{\ovec{10},\mathj_1(p)}(\sigma)$
coincide with the (extended)  Soul\'e value $\tilchi_{k}^{\ovec{10}}(\sigma)$
(cf.\,\cite[Remark 2]{NW1}).
The desired formula follows then from 
a basic formula from \cite[Proposition 5.13]{NW2}:
$\tilchi_{2k}^{\ovec{10}}(\sigma)=
\frac{B_{2k}}{2(2k)}(\chi(\sigma)^{2k}-1)$
$(\sigma\in G_\Q)$.
\end{proof}

Unlike the $\Z_\ell$-integral 
analog stated in Corollary \ref{dilogspecial},
the above right hand side generally 
has denominators in $\Q_\ell$.
This is due to the concern of $x^{-\frac{1}{2}}\in\pi_{\Q_l}(\ovec{01})$ 
which does not lie in $\pi_1^{\ell}(V_1\otimes\bK,\ovec{01})$
when $\ell=2$.

\bigskip
{\it Acknowledgement}: 
This work was partially supported by JSPS KAKENHI Grant Number JP26287006.

\ifx\undefined\bysame
\newcommand{\bysame}{\leavevmode\hbox to3em{\hrulefill}\,}
\fi

\end{document}